\definecolor{purple}{rgb}{.9,0,.9}
\definecolor{newred}{RGB}{180,20,5}
\def\clr{\color{newred}}
\definecolor{newgreen}{RGB}{1,129,30}
\let\orgdescriptionlabel\descriptionlabel
\renewcommand*{\descriptionlabel}[1]{%
  \let\orglabel\label
  \let\label\@gobble
  \phantomsection
  \edef\@currentlabel{#1}%
  \let\label\orglabel
  \orgdescriptionlabel{#1}%
}
\newcommand{\Nat}{\mathbb{N}}
\newcommand{\Real}{\mathbb{R}}
\newcommand{\rfa}{\quad {\rm for \ all}\ }
\newcommand{\cA}{{\cal A}}\newcommand{\cB}{{\cal B}}\newcommand{\cC}{{\cal C}}
\newcommand{\cI}{{\cal I}}
\newcommand{\cS}{{\cal S}}\newcommand{\cT}{{\cal T}}\newcommand{\cU}{{\cal U}}
\newcommand{\cX}{{\cal X}}
\newcommand{\ba}{{\bf a}}
\newcommand{\be}{{\bf e}}
\newcommand{\bn}{{\bf n}}
\newcommand{\bu}{{\bf u}}
\newcommand{\bv}{{\bf v}}\newcommand{\bw}{{\bf w}}\newcommand{\bx}{{\bf x}}
\newcommand{\bH}{{\bf H}}
\newcommand{\bL}{{\bf L}}
\newcommand{\tr}{\text{tr}}
\newcommand{\ve}{\varepsilon}
\newtheorem{theorem}{Theorem}[section]
\newtheorem{lemma}[theorem]{Lemma}
\newtheorem{proposition}[theorem]{Proposition}
\def\XXint#1#2#3{{\setbox0=\hbox{$#1{#2#3}{\int}$ }
\vcenter{\hbox{$#2#3$ }}\kern-.6\wd0}}
\newcommand{\beqn}{\begin{equation}}
\newcommand{\eeqn}{\end{equation}}
\newcommand{\bone}{\textbf{1}}
\title{On a  notion of nonlocal curvature tensor}
\author{
Roberto Paroni$^1$ \!\!\!\!\! \and Paolo Podio-Guidugli$^{2}$ \!\!\!\!\! \and Brian Seguin$^3$
}
\begin{document}

\maketitle

\vspace{-1cm}
\begin{center}
{\small
$^1$ DICI,
Universit\`a di Pisa\\ 
Largo Lucio Lazzarino 1, 56122 Pisa, Italy\\
\href{mailto:roberto.paroni@unipi.it}{roberto.paroni@unipi.it}\\[8pt]
$^2$ Accademia Nazionale dei Lincei\\ 
Palazzo Corsini, Via della Lungara 10, 00165 Roma, Italy\\
\href{mailto:p.podioguidugli@gmail.com}{p.podioguidugli@gmail.com}\\[8pt]
$^3$  Department of Mathematics and Statistics\\ 
Loyola University Chicago, Chicago, IL 60660, USA\\
\href{mailto:bseguin@luc.edu}{bseguin@luc.edu}\\[8pt]
}
\end{center}

\bigskip
\begin{center}
{\it Dedicated to Roger L.\ Fosdick\\ in recognition of his lifelong unvaluable service to our scientific community}
\end{center}
\medskip

\begin{abstract}
In the literature various notions of nonlocal curvature can be found. Here we propose a notion of nonlocal curvature tensor. This we do
by generalizing an appropriate representation of the classical curvature tensor and by exploiting some analogies with certain fractional differential operators.

\end{abstract}

\section{Introduction}
The objects of classical differential geometry are curves, surfaces, and curves on surfaces. Their study is {\it{local}}, in terms of certain first- and second-order differential characters, which for surfaces are the surface metric tensor and the curvature tensor, respectively. The latter tensor carries the information of interest for most applications, via its trace, the mean curvature, and its determinant, the Gaussian curvature. The classical notion of mean curvature offers a well known bridge between geometry and analysis, in that it  is deducible from the
stationarity condition of the area functional, a condition crucial to solve a central problem in modern calculus of variations, the minimal surface problem.

To date, various notions of {\it{nonlocal}} curvature have been proposed for surfaces. To our knowledge, the only proposition for curves has been made in \cite{S20} (see \cite{S20a} for an updated, corrected version of this work). The common fundamental idea of the numerous papers dealing with 
surfaces is to introduce a fractional notion of area and find what condition(s) must hold point-wise on the surface to ensure that the fractional area has a minimum subject to some boundary condition. In this paper we propose a notion of {\it{nonlocal curvature tensor}}. Our present work is a follow-up of 
\cite{PPGS}, a paper prompted by \cite{PPGL}, inspired by our reading of \cite{AV}, and written in the wake of a quite abundant literature, part of which we briefly review in the next section (see also \cite{Alb,AdPM,LC, LCMag, CS,CVb,Merri}).

In Section 2 we discuss the notions of nonlocal mean and directional curvatures at a typical point of a surface, both when the surface in question is the boundary of a bounded and compact set and when it is not. In Section 3 we motivate and introduce a new notion of {\it{nonlocal curvature tensor}}, which we illustrate by computing it for a $n$-dimensional sphere. In Section 4 we set forth some conjectures about representing such nonlocal curvature tensor in terms of fractional differential operators. The mathematical machinery from fractional calculus we employ is partly standard, quickly recapped in Section 4.1; the representation of a second fractional gradient of a scalar-valued function exposed in Section 4.2 is new. Unfortunately,  we conclude that our conjectures, no matter how carefully motivated, cannot be verified in their present form. We then finish with another conjecture, namely that the nonlocal curvature tensor should be defined in terms of a---at the moment of this writing---nonexisting notion of fractional surface gradient of the normal field. 
\section{Notions of nonlocal curvature for a surface}

In this section we summarize some information about the nonlocal geometry of surfaces, both when they bound a set and when they do not, and hence have a boundary. 

The perimeter measure of a bounded set $E$ with nice boundary $\partial E$ is the same as the area measure of $\partial E$. An identical, at bottom purely terminological, alternative occurs when fractional counterparts of these notions are introduced.
For each $\sigma$ between 0 and 1,  $\sigma$-$\text{Per}$, the {\it{fractional perimeter}}, also called $\sigma$-perimeter, of a measurable set $E\subseteq\Real^n$ relative to a bounded set $\Omega$, is delivered by the functional
\beqn\label{i}
\sigma\text{-Per}(E,\Omega):=\cI(E\cap\Omega,\cC E\cap\Omega)+\cI(E\cap\Omega,\cC E\cap\cC\Omega)+\cI(E\cap\cC\Omega,\cC E\cap\Omega),
\eeqn
where the integral
\beqn\label{ii}
\cI(A,B):=\frac{1}{\alpha_{n-1}}\int_A\int_B\frac{1}{|x-y|^{n+\sigma}} dxdy
\eeqn
can be interpreted as a geometric distance interaction between the sets $A$ and $B$.
Caffarelli and Valdinoci \cite{CVa} showed that if $\partial E\cap B_R$ is $C^{1,\beta}$ for $\beta\in(0,1)$ and $B_R$ a ball of radius $R$, then
\beqn\label{sPerlim}
\lim_{\sigma\rightarrow 1^-} (1-\sigma)\sigma\text{-Per}(E,B_r)=\text{Per}(E,B_r)
\eeqn
for almost every $r\in(0,R)$.  

A set $E$ is a minimizer of the $\sigma$-perimeter relative to $\Omega$ if
\beqn
\sigma\text{-Per}(E,\Omega)\leq \sigma\text{-Per}(F,\Omega)
\eeqn
for all measurable sets $F$ such that $E\setminus \Omega=F\setminus\Omega$, meaning that $E$ and $F$ agree outside of $\Omega$.  
It was shown by Caffarelli, Roquejoffre, and Savin \cite{CRS} that if $E$ is a minimizer, then it must satisfy 
\beqn \label{a}
\int_{\Real^n} \frac{\tilde\chi_E(y)}{|z-y|^{n+\sigma}} dy=0\rfa z\in\partial E,
\eeqn
where 
\beqn\label{chibar}
\tilde\chi_E:=\chi_E-\chi_{\cC E}
\eeqn
 is a difference of characteristic functions.\footnote{The integral in \eqref{a} must be understood in the principal value sense as the integrand has a singularity at $y=z$.}  Consideration of the relationship in \eqref{sPerlim} between  $\sigma$-perimeter and classical perimeter and of the well known fact that surfaces minimizing their perimeter have boundaries with zero mean curvature, motivates defining a {\it{fractional}}, or {\it{nonlocal}}, {\it{mean curvature}} at $z\in\partial E$ by
\beqn\label{Hsper}
H_\sigma(z):=\frac{1}{\omega_{n-2}}\int_{\Real^n}  \frac{\tilde\chi_E(y)}{|z-y|^{n+\sigma}} dy,
\eeqn
where $\omega_{n-2}$ is the $(n-2)$-dimensional measure of the unit sphere in $\Real^{n-1}$.  The factor used in this definition is suggested by a result of Abatangelo and Valdinoci's \cite{AV}: if $E$ has a smooth boundary, then
\beqn\label{limHs}
\lim_{\sigma\rightarrow 1^-} (1-\sigma)H_\sigma(z)=H(z),
\eeqn
where $H(z)$ is the classical mean curvature at $z\in\partial E$.

The above discussion involves $(n-1)$-dimensional surfaces that are the boundary of a set.  However, these ideas can be extended to orientable surfaces that are not the boundary of a set. 

Let $\cS$ be a compact $C^1$ surface with or without boundary in $\Real^n$, and let a unit-vector valued function $\bn$ defined on $\cS$ define its orientation. 
For $0<\sigma<1$ and $\Omega$ a bounded open set, the $\sigma$-Area of $\cS$ relative to $\Omega$ was defined in 
\cite{PPGS} to be
\beqn\label{sa}
\sigma\text{-Area}(\cS,\Omega):=\frac{1}{2\alpha_{n-1}}\int_{\cX(\cS)} \frac{\max\{\chi_\Omega(x),\chi_\Omega(y)\}}{|x-y|^{n+\sigma}}dxdy,
\eeqn
where $\alpha_{n-1}$ is the volume of the unit ball in $\Real^{n-1}$ and $\cX(\cS)$ is the set of all pairs $(x,y)\in\Real^n\times\Real^n$ such that the line segment joining $x$ and $y$ crosses $\cS$ an odd number of times.  

When $\cS=\partial E$, we have that
$$
\cX(\cS) = (E\times \cC E)\cup(\cC E\times E)
$$
up to a set of measure zero in dimension $2n$, see \cite{PPGS}. Moreover,  we deduce from \eqref{sa} that 
$$
\sigma\text{-Area}(\cS,\Omega)= 
\cI(E\cap\Omega,\cC E\cap\Omega)+\cI(E\cap\Omega,\cC E\cap\cC\Omega)+\cI(E\cap\cC\Omega,\cC E\cap\Omega)
$$
and hence from \eqref{i}, it follows that
$$
\sigma\text{-Area}(\cS,\Omega)=\sigma\text{-Per}(E,\Omega)
$$
and therefore the $\sigma$-Area generalizes the $\sigma$-Per.

The condition necessary and sufficient for the vanishing of the first variation of the $\sigma$-Area functional with respect to surfaces with the same boundary is a pointwise condition which was used in \cite{PPGS} to motivate the following notion of {\it{nonlocal mean curvature}}:
\beqn\label{Hs}
H_\sigma(z):=\frac{1}{\omega_{n-2}}\int_{\Real^n} \frac{\widehat\chi_\cS(z,y)}{|z-y|^{n+\sigma}}dy\rfa z\in\cS,
\eeqn
where 
\beqn\label{chiS}
\widehat\chi_\cS(z,y):=
\begin{cases}
1 & y\in\cA_i(z),\\
0 & y\not\in \cA_i(z)\cup\cA_e(z),\\
-1 & y\in\cA_e(z),
\end{cases}
\eeqn
and
\begin{align*}
\cA_e(z)&:=\big\{y\in \Real^n\ |\ \big((z,y)\in \cX(\cS)\ \text{and}\ (z-y)\cdot \bn(z)> 0\big)\\
&\hspace{1in}\text{or } \big((z,y)\in \cX(\cS)^c\ \text{and}\ (z-y)\cdot \bn(z)< 0\big)\big\},\\
\cA_i(z)&:=\big\{y\in \Real^n\ |\ \big((z,y)\in \cX(\cS)^c\ \text{and}\ (z-y)\cdot \bn(z)> 0\big)\\
&\hspace{1in}\text{or } \big((z,y)\in \cX(\cS)\ \text{and}\ (z-y)\cdot \bn(z)< 0\big)\big\}.
\end{align*}
As with the integral in \eqref{Hsper}, the integral in \eqref{Hs} must be understood in the principal value sense.  Note that this definition does not require the surface $\cS$ to be compact, so it could be unbounded; for a depiction of the sets $\cA_e(z)$ and $\cA_i(z)$, see Figure~\ref{AIAEcolor}.
\begin{figure}[h]
\centering
\includegraphics[width=4in]{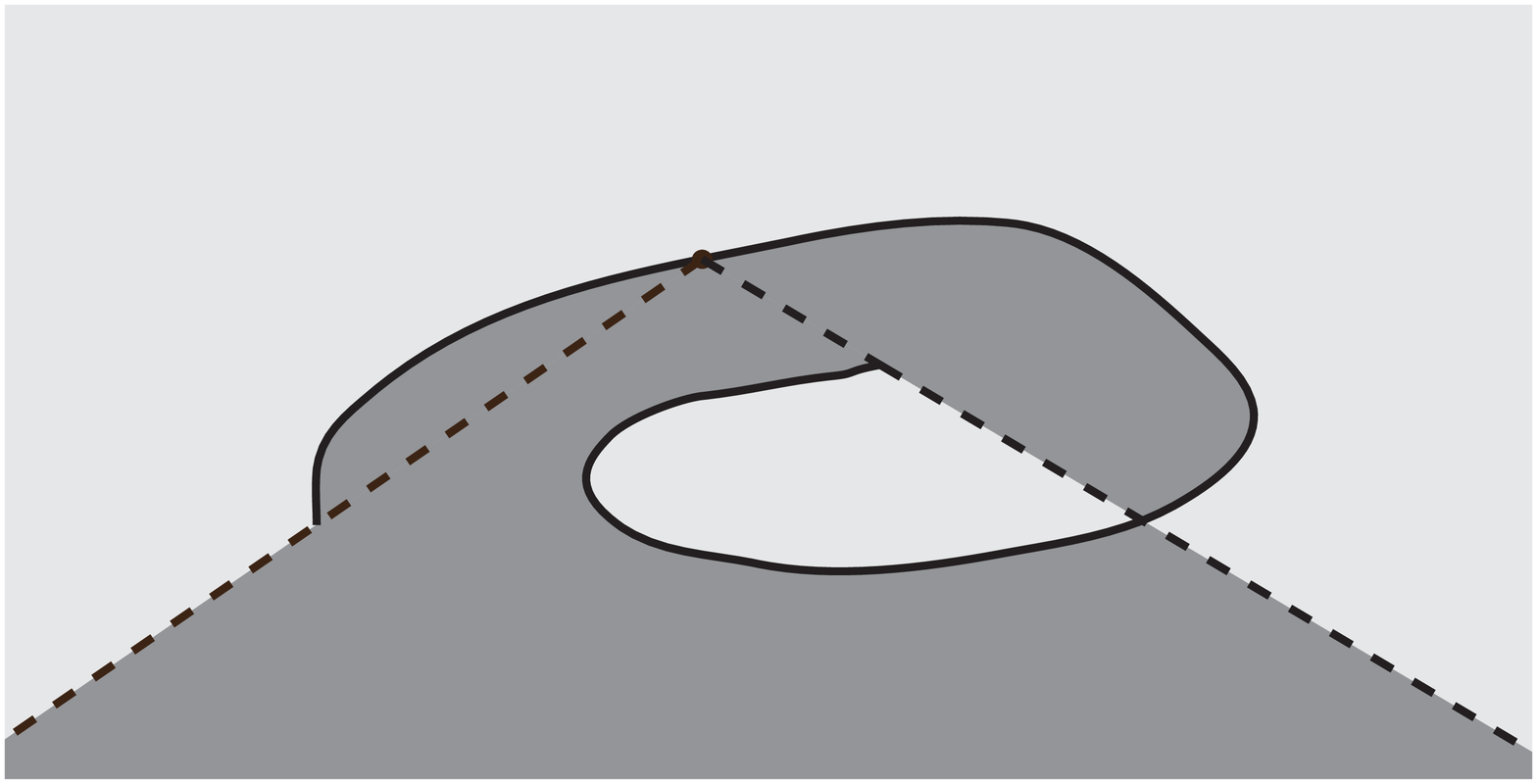}
\thicklines
\put(-170,100){$z$}
\put(-250,100){$\cA_e(z)$}
\put(-150,20){$\cA_i(z)$}
\put(-158,110){$\bn$}
\put(-80,100){$\cS$}
\put(-171.5,105){\rotatebox[origin=c]{100}{$\vector(1,0){25}$}}
\caption{The solid line depicts $\cS$. The set $\cA_e(z)$ is shown in light grey,  the set $\cA_i(z)$ in dark grey.  The dashed lines depict the part of the boundary between $\cA_e(z)$ and 
$\cA_i(z)$ that is not part of $\cS$.}
\label{AIAEcolor}
\end{figure}
\noindent On making use of the identity
\beqn
|z-y|^{-(n+\sigma)}=\frac{1}{\sigma} \text{div}_y\big[ |z-y|^{-(n+\sigma)}(z-y)\big]
\eeqn
(see \cite{Cetal}) and the divergence theorem, it is possible to write the nonlocal mean curvature as an integral over $\cS$:
\beqn\label{Hsrep}
H_\sigma(z)=\frac{2}{\sigma\omega_{n-2}} \int_\cS \frac{(z-y)\cdot \bn_{\cA_i(z)}(y)}{|z-y|^{n+\sigma}}\, dy.
\eeqn

In \cite{AV}, Abatangelo and Valdinoci introduced a notion of nonlocal directional curvature for surfaces being the  complete boundaries of sets. Their definition was extended to surfaces with boundary in 
\cite{PPGS}, in the way describe here below.  

For any point $z\in \cS$ and any unit vector $\be$ tangent to $\cS$ at $z$, let
\[\label{pixe}
\pi(z,\be):=\{y\in\Real^n\,|\;y=z+\rho\be+h\bn(z),\;\,\rho>0,\;\,h\in\Real\}
\]
be the half-plane through $z$ defined by the unit vector $\be$ and the normal $\bn(z)$ (see Figure \ref{fig2}). 
\begin{figure}[h]
\centering%
\includegraphics[height=6cm]{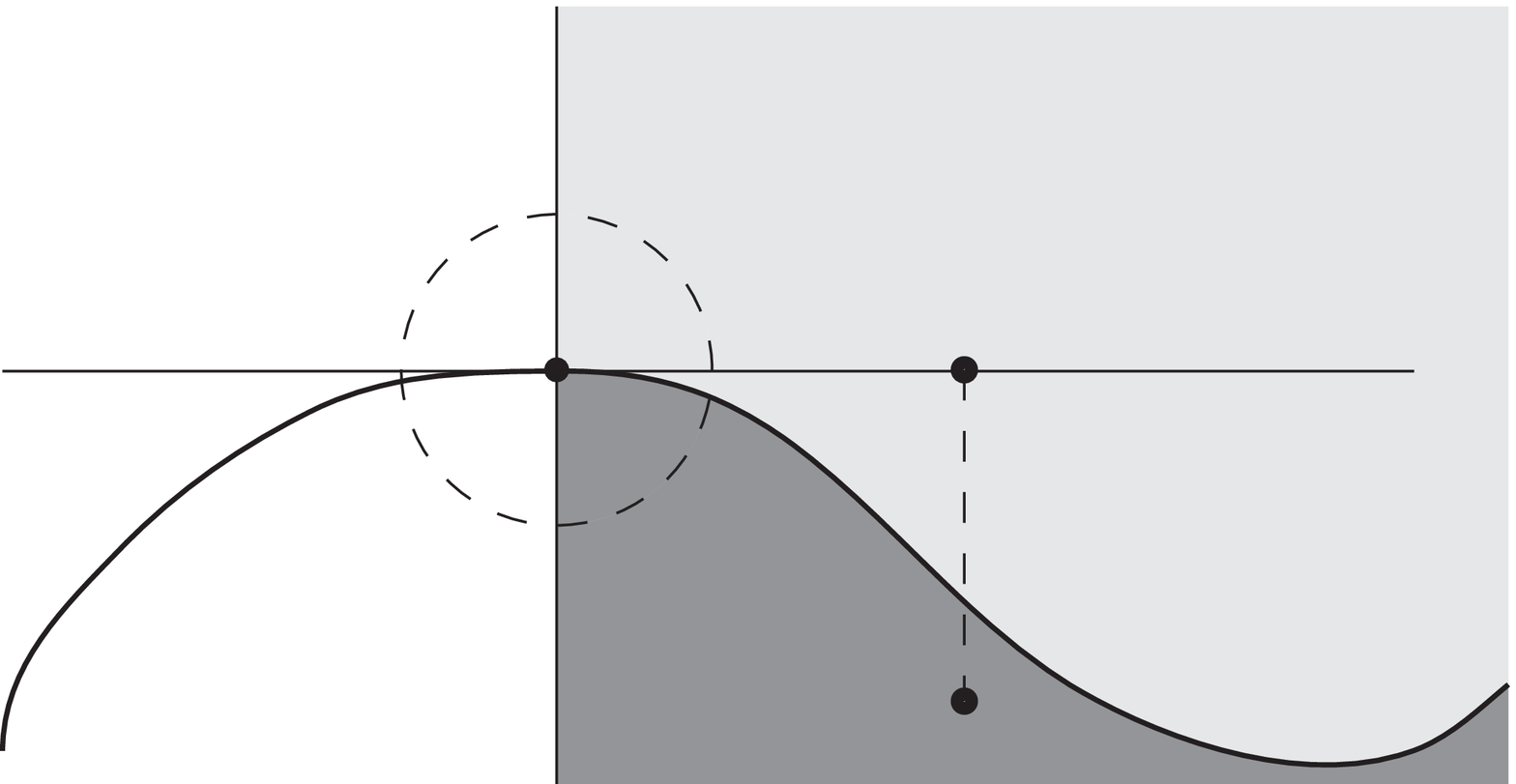}
\thicklines
\put(-200,20){$\cA_i(z)\cap \pi(z,\be)$}
\put(-115,10){$y$}
\put(-115,100){$y^\prime$}
\put(-205,95){$z$}
\put(-150,140){$\cA_e(z)\cap\pi(z,\be)$}
\put(-240,100){$B_\ve(z)$}
\put(-219.5,140){\rotatebox[origin=c]{90}{$\vector(1,0){25}$}}
\put(-235,140){$\bn(z)$}
\put(-80,90.5){\rotatebox[origin=c]{0}{$\vector(1,0){25}$}}
\put(-75,80){$\be$}
\caption{A depiction of the half plane $\pi(z,\be)$.}
\label{fig2}
\end{figure}
Moreover, given any point $y\in \pi(z,\be)$, let $y'$ denote the projection of this point onto the line through $z$ in the direction of $\be$. The  {\it{nonlocal directional curvature}} at $z$ in the direction of 
$\be$,
\beqn\label{Kse}
k_{\sigma,\be}(z):=\int_{\pi(z,\be)}\frac{|z-y'|^{n-2}\,\widehat\chi_\cS(z,y)}{|z-y|^{n+\sigma}} dy,
\eeqn
satisfies a limit relation analogous to \eqref{limHs}, namely
\beqn\label{limks}
\lim_{\sigma\rightarrow 1^-} (1-\sigma)k_{\sigma,\be}(z)=k_{\be}(z).
\eeqn
The nonlocal mean and directional curvatures at $z$ are related through
\beqn\label{Kseave}
H_\sigma(z)=\frac{1}{\omega_{n-2}} \int_{\cU(\cT_z\cS)}k_{\sigma,\be}(z)\, d\be,
\eeqn
where $\cU(\cT_z\cS)$ is the set of unit vectors in the tangent space $T_z\cS$ to $\cS$ at $z$.  This means that the nonlocal mean curvature is the average of the nonlocal directional curvatures, just as the local mean curvature is the average of the local directional curvatures.


\section{A notion of nonlocal curvature tensor}\label{section3}

Our definition of a nonlocal curvature tensor will be motivated by a new representation, that we now derive, for its local counterpart.

The curvature tensor $\bL(z)$ at a point $z\in\cS$ is defined as minus the surface gradient of the normal $\bn$ at $z$:
\beqn\label{clagrad}
\bL(z)=-^{s}\nabla \bn(z).
\eeqn
Given a unit vector $\be\in\cT_z\cS$, the directional curvature in the direction of $\be$ is
\beqn\label{dcv}
k_{\be}(z):=\be\cdot\bL(z)\be.
\eeqn
The following result shows that the curvature tensor is completely determined by the set of the directional curvatures.
\begin{proposition}
For all $z\in\cS$,
\beqn\label{Lclass}
\bL(z)=\frac{n-1}{2\omega_{n-2}} \int_{\cU(\cT_z\cS)}k_{\be}(z)\big((n+1)\be\otimes \be - \text{\bf 1}_{T_z\cS}\big)   \, d\be.
\eeqn
\end{proposition}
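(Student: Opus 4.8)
The plan is to reduce \eqref{Lclass} to two elementary moment integrals over the unit sphere. First I would record two structural facts about $\bL(z)$. Differentiating $\bn\cdot\bn=1$ along $\cS$ shows that $\bL(z)$, defined by \eqref{clagrad}, annihilates $\bn(z)$ and maps $\cT_z\cS$ into itself, and the Weingarten operator is self-adjoint for the surface metric, so $\bL(z)$ is a \emph{symmetric} endomorphism of the $(n-1)$-dimensional space $\cT_z\cS$. The right-hand side of \eqref{Lclass} is manifestly symmetric on $\cT_z\cS$ as well, and by \eqref{dcv} the coefficient $k_\be(z)$ appearing there is exactly the quadratic form of $\bL(z)$ evaluated at $\be$. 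Fixing an orthonormal basis of $\cT_z\cS$, we may identify $\cU(\cT_z\cS)$ with the unit sphere $S^{n-2}\subset\Real^{n-1}$, whose $(n-2)$-dimensional measure is $\omega_{n-2}$.

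Expanding the integrand, the right-hand side of \eqref{Lclass} becomes
\[
\frac{n-1}{2\omega_{n-2}}\left[(n+1)\int_{S^{n-2}}\big(\be\cdot\bL(z)\be\big)\,\be\otimes\be\;d\be-\Big(\int_{S^{n-2}}\be\cdot\bL(z)\be\;d\be\Big)\mathbf{1}_{\cT_z\cS}\right],
\]
so the proposition follows once the second and fourth moments of the uniform measure on $S^{n-2}$ are known. By $O(n-1)$-invariance of this measure, $\int_{S^{n-2}}e_ie_j\,d\be=c_2\,\delta_{ij}$ and $\int_{S^{n-2}}e_ie_je_ke_l\,d\be=c_4(\delta_{ij}\delta_{kl}+\delta_{ik}\delta_{jl}+\delta_{il}\delta_{jk})$; contracting with $\delta_{ij}$, respectively $\delta_{ij}\delta_{kl}$, and using $|\be|=1$ gives $c_2=\omega_{n-2}/(n-1)$ and $c_4=\omega_{n-2}/\big((n-1)(n+1)\big)$. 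Hence, using the symmetry of $\bL(z)$,
\[
\int_{S^{n-2}}\be\cdot\bL(z)\be\;d\be=\frac{\omega_{n-2}}{n-1}\,\tr\bL(z),\qquad \int_{S^{n-2}}\big(\be\cdot\bL(z)\be\big)\,\be\otimes\be\;d\be=\frac{\omega_{n-2}}{(n-1)(n+1)}\big(\tr\bL(z)\,\mathbf{1}_{\cT_z\cS}+2\bL(z)\big).
\]

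Substituting these two identities into the displayed expression, the factor $n+1$ cancels, the prefactor $\tfrac{n-1}{2\omega_{n-2}}$ cancels $\tfrac{\omega_{n-2}}{n-1}$, the two $\tr\bL(z)\,\mathbf{1}_{\cT_z\cS}$ contributions cancel, and what remains is $\tfrac12\cdot 2\,\bL(z)=\bL(z)$, which is \eqref{Lclass}. I do not expect a genuine obstacle here: the computation is routine, and the only points needing care are the appeal to the symmetry of $\bL(z)$ — without which the right-hand side would reproduce only the symmetric part of $\bL(z)$ — and keeping the normalizations consistent, in particular that $\omega_{n-2}$ is the measure of $S^{n-2}$, the unit sphere of the $(n-1)$-dimensional tangent space over which the averaging is performed.
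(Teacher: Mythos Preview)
Your proof is correct. Both your argument and the paper's ultimately rest on the identity
\[
\int_{\cU(T_z\cS)} k_\be(z)\,\be\otimes\be\,d\be=\frac{\omega_{n-2}}{(n-1)(n+1)}\big(\tr\bL(z)\,\mathbf{1}_{T_z\cS}+2\bL(z)\big),
\]
but you arrive at it differently. The paper converts the sphere integral to a ball integral via the divergence theorem, computing $\int_{\cB(T_z\cS)}\nabla[(\bx\otimes\bL(z)\bx)\bx]\,d\bx$ and the auxiliary moment $\int_{\cB(T_z\cS)}\bx\otimes\bx\,d\bx$ by another application of the divergence theorem; it then solves the resulting relation for $\bL(z)$, takes a trace to eliminate $\tr\bL(z)$, and substitutes back. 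You instead invoke $O(n-1)$-invariance to pin down the tensor structure of the second and fourth spherical moments, normalize by contraction, and verify the claimed formula directly by plugging in. Your route is a bit more streamlined---it bypasses the detour through the ball and the solve-trace-substitute cycle---while the paper's approach has the minor advantage of not assuming the form of the answer in advance. Your explicit flag that symmetry of $\bL(z)$ is what makes $L_{kl}+L_{lk}=2L_{kl}$ is a point the paper uses silently.
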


\begin{proof}
Let $\cB(T_z\cS)$ denote the unit ball in $T_z\cS$, and let   $\bx=r\be$, with $r\in [0,1]$ and $\be\in \cU(T_z\cS)$, denote a typical point of $\cB(T_z\cS)$. Using spherical coordinates we find that
\begin{align}
\int_{\cB(\cT_z\cS)}\bx\otimes \bx\, d\bx &= \int_{\cU(T_z\cS)}\int_0^1  r^{n-2} r \be\otimes r\be\,dr\, d\be= \frac{1}{n+1}  \int_{\cU(T_z\cS)}\be\otimes \be\,d\be\label{intee0}\\
&=\frac{1}{n+1}  \int_{\cB(T_z\cS)}\nabla \bx\,d\bx= \frac{\alpha_{n-1}}{n+1}\text{\bf 1}_{T_z\cS},\label{intee}
\end{align}
where the first identity of the second line is obtained by observing that $\cU(T_z\cS)$ can be identified as the set of unit normals to $\partial \cB(T_z\cS)$ and by applying the divergence theorem in the form
\[
\int_{\cB(T_z\cS)}\nabla \bx\,d\bx=\int_{\partial \cB(T_z\cS)}\bn\otimes\bn\,d\bn.
\]

Next, a use of \eqref{dcv} and \eqref{intee} and an application of the divergence theorem give 
\begin{align}
\nonumber\int_{\cU(T_z\cS)} k_\be(z)\, \be\otimes\be\, d\be &= \int_{\cU(T_z\cS)} (\be\otimes\bL(z)\be) \be\otimes\be\, d\be\\
\nonumber&= \int_{\cB(T_z\cS)} \nabla[(\bx\otimes\bL(z)\bx) \bx]\, d\bx\\
\nonumber&= \int_{\cB(T_z\cS)} [(\bx\cdot\bL(z)\bx) \text{\bf 1}_{T_z\cS} + 2\bx\otimes\bL(z)\bx\, ]d\bx\\
\nonumber&=\Big(\int_{\cB(T_z\cS)} \bx\otimes\bx \,d\bx\cdot \bL(z)  \Big) \text{\bf 1}_{T_z\cS}+2\Big(\int_{\cB(T_z\cS)} \bx\otimes\bx \,d\bx\Big)\bL(z)\\
&=\frac{\alpha_{n-1}}{n+1}\Big( (\tr \bL(z))\text{\bf 1}_{T_z\cS}+2\bL(z)\Big).
\end{align}
Solving for $\bL(z)$ shows that
\beqn\label{26}
\bL(z)=\frac{n+1}{2\alpha_{n-1}}\int_{\cU(T_z\cS)} k_\be(z) \be\otimes\be\, d\be-\frac{1}{2}(\tr\, \bL(z))\text{\bf 1}_{T_z\cS}.
\eeqn
Taking the trace of \eqref{26}, we find
\beqn \label{27}
\tr\,\bL(z)=\frac{1}{\alpha_{n-1}} \int_{\cU(T_z\cS)} k_\be(z)\, d\be;
\eeqn
the desired result follows  by substituting \eqref{27} in \eqref{26} and using the fact that
$\omega_{n-2}=(n-1)\alpha_{n-1}.$
\end{proof}

The definition of the nonlocal curvature tensor we propose is modelled on \eqref{Lclass}, with
\begin{enumerate}
\item[(i)] the directional curvature $k_\be(z)$ replaced by the nonlocal analog $k_{\sigma,\be}(z)$ defined by \eqref{Kse};
\item[(ii)] the $2$ appearing in the denominator replaced by $1+\sigma$;
\item[(iii)] the $n+1$ appearing in the integrand replaced  by $n+\sigma$.
\end{enumerate}  
Precisely, we define the nonlocal curvature tensor as
\beqn\label{Ldd}
\bL_\sigma(z):=\frac{n-1}{(1+\sigma)\omega_{n-2}} \int_{\cU(\cT_z\cS)}k_{\sigma,\be}(z)\big((n+\sigma)\be\otimes \be - \text{\bf 1}_{T_z\cS}\big)   \, d\be.
\eeqn
Notice that $\bL_\sigma(z)$ is a symmetric tensor which takes tangent vectors into tangent vectors, just like the classical curvature tensor does, moreover, it follows from \eqref{limks} and \eqref{Lclass} that
\beqn\label{limLs}
\lim_{\sigma\rightarrow 1^-}(1-\sigma)\bL_\sigma(z)=\bL(z).
\eeqn
By taking the trace of \eqref{Ldd} and making use of  \eqref{Kseave} we arrive at  
\beqn\label{trLs}
H_\sigma(z)=\frac{1}{n-1}\tr\, \bL_\sigma(z),
\eeqn
that is, the  nonlocal mean curvature is the mean of the terms on the diagonal appearing in any matrix representing, with respect to an orthonormal basis, the nonlocal curvature tensor. This identity would not hold as stated without the modification above indicated by (ii). The reason for the modification indicated by  (iii) will be explained in Section \ref{section4}.

Finally, we define the nonlocal Gaussian curvature at $z\in\cS$  as
\beqn\label{sigmagauss}
K_\sigma(z):=\text{det}\, \bL_\sigma(z).
\eeqn

We conclude this section with two results related to the nonlocal curvature tensor we just introduced. 
\vskip 6pt
\noindent(i)  $\bL_\sigma(z)$ could be written in terms of $\widehat\chi(z,\cdot)$ rather than $k_{\sigma,\be}(z)$.  Indeed, on using \eqref{Kse} and a change of variables, $\bL_\sigma(z)$ can be expressed as
\beqn\label{sLmod}
\bL_\sigma(z)=\frac{n-1}{(1+\sigma)\omega_{n-2}} \int_{\Real^n} \frac{\widehat\chi_\cS (z,y)}{|z-y|^{n+\sigma}}((n+\sigma) \hat\be_z(y)\otimes \hat\be_z(y)-\text{\bf 1}_{T_z\cS})\, dy,
\eeqn
where
\beqn
\hat\be_z(y):=\frac{y'-z}{|y'-z|}.
\eeqn
Using the fact that
\beqn
(\nabla_y \hat\be_z(y)) (z-y)=\textbf{0}\quad \text{for all }y\in\Real^n\backslash\{z\}
\eeqn
and arguments similar to those leading to \eqref{Hsrep}, one can show that $\bL_\sigma(z)$ also has the representation
\beqn\label{Lrep}
\bL_\sigma(z)=\frac{2(n-1)}{{ (1+\sigma)\sigma}\,\omega_{n-2}} \int_\cS \frac{(z-y)\cdot\bn_{\cA_i(z)}(y)}{|z-y|^{n+\sigma}}(( n+\sigma) \hat\be_z(y)\otimes \hat\be_z(y)-\text{\bf 1}_{T_z\cS}) \, dy
\eeqn
involving an integral over the surface $\cS$.

\vskip 6pt
\noindent(ii) 
Consider the case $n=3$. Then, \eqref{Ldd} gives 
\beqn\label{Ldd3}
\bL_\sigma(z)=\frac{1}{(1+\sigma)\pi} \int_{\cU(T_z\cS)}k_{\sigma,\be}(z)\big((3+\sigma)\be\otimes \be - \text{\bf 1}_{T_z\cS}\big)   \, d\be.
\eeqn
 
For any unit vector $\be\in\cT_z\cS$, let
$$
\be^\perp:=\bn(z)\times \be;
$$
note that $\be\cdot \tilde \be^\perp=- \be^\perp\cdot \tilde \be$, for any unit vectors $\be$ and $\tilde \be$.
Furthermore, a calculation shows that, for any constant $c$ and any unit vector $\be$,
$$
\text{cof}(c\, \be\otimes \be -\text{\bf 1}_{T_z\cS})= c\, \be^\perp\otimes \be^\perp-\text{\bf 1}_{T_z\cS},
$$
where ``cof'' denotes the cofactor of a tensor, i.e., the transpose of its adjugate. Since 
$$(\text{det}\,\bL_\sigma(z))\textbf{1}_{T_z\cS}=\bL_\sigma(z) \,(\text{cof}\,\bL_\sigma(z))^T,$$
we have that
\begin{align*}
2\, \text{det}\,\bL_\sigma &= \bL_\sigma \cdot \text{cof}\,\bL_\sigma \\
&=\bL_\sigma\cdot \text{cof}\,
\int_{\cU(T_z\cS)}\bar k_{\sigma,\be}(z)\big((3+\sigma)\be\otimes \be - \text{\bf 1}_{T_z\cS}\big)\, d\be\qquad \Big(\bar k_{\sigma,\be}:=\displaystyle\frac{1}{(1+\sigma)\pi}k_{\sigma,\be}\Big)\\
&= \bL_\sigma\cdot \,
 \int_{\cU(T_z\cS)}\bar k_{\sigma,\be}(z)\big((3+\sigma)\be^\perp\otimes \be^\perp - \text{\bf 1}_{T_z\cS}\big)\, d\be,
\end{align*}
where the last equality follows from the linearity of the cofactor map in dimension 2. Next, by the use of \eqref{Ldd3}, we find that
\begin{align*}
2\, \text{det}\,\bL_\sigma &=\int_{\cU(T_z\cS)}\bar k_{\sigma,\tilde\be}\big((3+\sigma)\tilde\be\otimes \tilde\be - \text{\bf 1}_{T_z\cS}\big)\, 
d\tilde\be\cdot 
 \int_{\cU(T_z\cS)}\bar k_{\sigma,\be}\big((3+\sigma)\be^\perp\otimes \be^\perp - \text{\bf 1}_{T_z\cS}\big)\, d\be\\
&=\int_{\cU(T_z\cS)}\int_{\cU(T_z\cS)} \bar k_{\sigma,\tilde\be}\bar k_{\sigma,\be}
\big[(3+\sigma)^2(\tilde\be\cdot \be^\perp)^2-2(3+\sigma)+2\big]\, d\be d\tilde\be\\
&=\int_{\cU(\cT_z\cS)}\int_{\cU(T_z\cS)} \bar k_{\sigma,\tilde\be}\bar k_{\sigma,\be}
\big[(3+\sigma)^2(\tilde\be\cdot \be^\perp)^2-2(2+\sigma)\big]\, d\be d\tilde\be.
\end{align*}

\noindent
Finally, for $\vartheta(\be,\tilde \be)$ the angle between the vectors $\be$ and $\tilde \be$,
$$(\tilde\be\cdot \be^\perp)^2=((\bn\times \be)\cdot \tilde\be)^2=((\be \times \tilde \be) \cdot \bn)^2=|\be \times \tilde \be|^2=\sin^2\vartheta(\be,\tilde \be).$$
\noindent
In view of definition \eqref{sigmagauss}, we conclude that
$$
K_\sigma(z) =\frac{1}{2{ (1+\sigma)^2}\pi^2}
\int_{\cU(\cT_z\cS)}\int_{\cU(\cT_z\cS)}  k_{\sigma,\tilde\be}(z) k_{\sigma,\be}(z)
\big[(3+\sigma)^2\sin^2\vartheta(\be,\tilde \be)-2(2+\sigma)\big]\, d\be d\tilde\be.
$$

\subsection{The case of a sphere}

Our present goal is  to compute the nonlocal curvature tensor of a sphere $\cS$ of radius $\rho$ 
in $\Real^n$; without loss of generality we take $\cS$ centered at the origin, as depicted in Fig. \ref{fig3}, the version of Fig. \ref{fig2} appropriate to the present context.
\begin{figure}[h]
\center%
\includegraphics[scale=0.5]{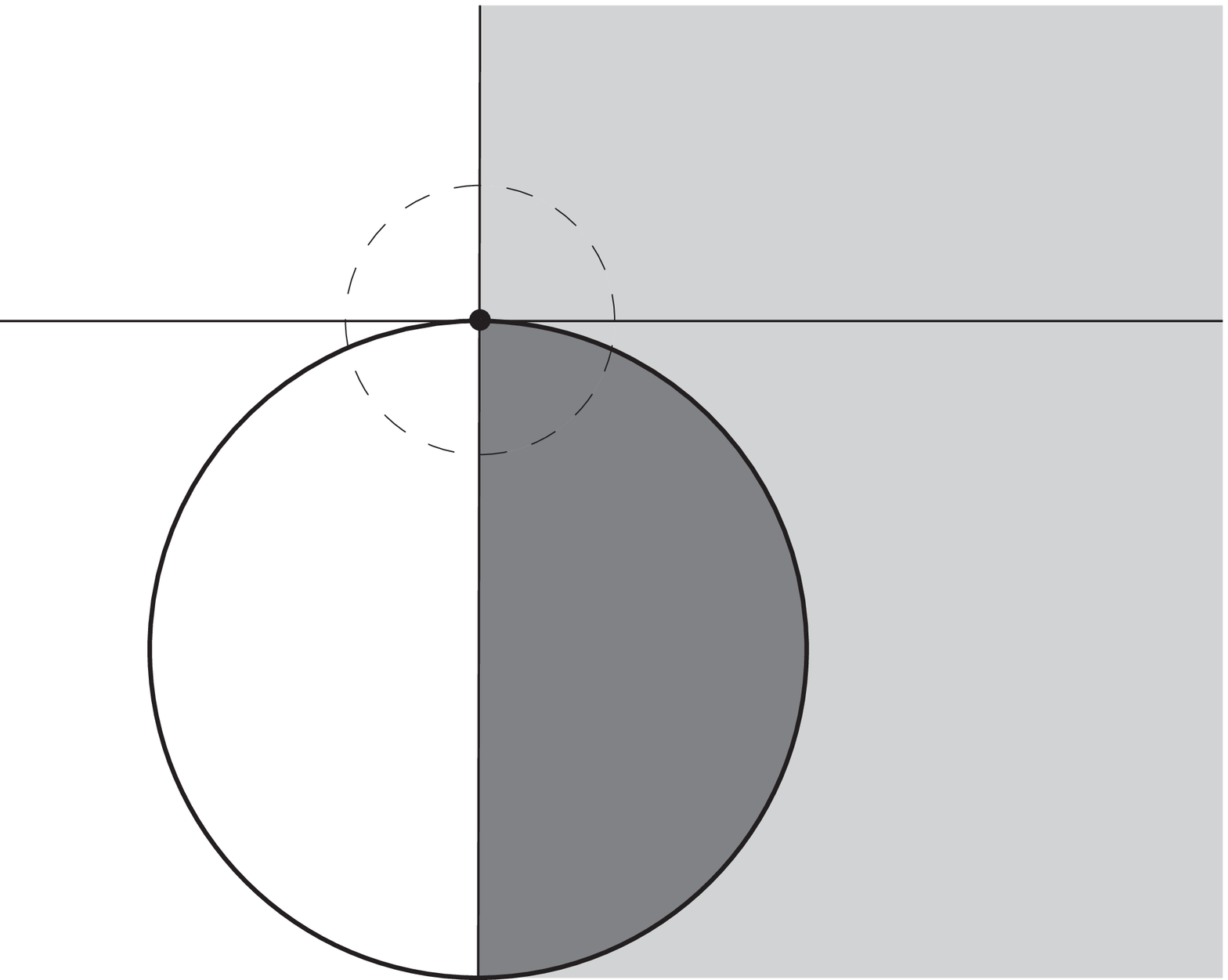}
\thicklines
\put(-167,70){$\cA_i(z)\cap \pi(z,\be)$}
\put(-165,155){$z$}
\put(-130,180){$\cA_e(z)\cap\pi(z,\be)$}
\put(-198,155){$B_\ve(z)$}
\put(-179,195){\rotatebox[origin=c]{90}{$\vector(1,0){25}$}}
\put(-165,195){$\bn(z)$}
\put(-80,149.8){\rotatebox[origin=c]{0}{$\vector(1,0){25}$}}
\put(-75,140){$\be$}
\caption{A depiction of the half plane $\pi(z,\be)$ when $\cS$ is a sphere.}
\label{fig3}
\end{figure}

In view of the symmetry intrinsic to a sphere, we specialize definition \eqref{Ldd}
\begin{align}\label{Ldd1}
\bL_\sigma(z)&=k_{\sigma,\be}(z)\frac{n-1}{(1+\sigma)\omega_{n-2}} \int_{\cU(\cT_z\cS)}\big((n+\sigma)\tilde \be\otimes\tilde \be - \text{\bf 1}_{T_z\cS}\big)   \, d\tilde\be&\\&=k_{\sigma,\be}(z)\text{\bf 1}_{T_z\cS},\label{Ldd2}
\end{align}
where the second equality follows from \eqref{intee0} and \eqref{intee}.
We now concentrate on computing the nonlocal directional derivative according to definition \eqref{Kse}, that is, on computing
\beqn
 k_{\sigma,\be}(z)=\lim_{\ve\rightarrow 0} \int_{\pi(z,\be)\setminus B_\varepsilon(z)} \frac{|y'-z|^{n-2}\widehat\chi_\cS(y,z)}{|y-z|^{n+\sigma}}\,dy.
\eeqn

 We begin by observing that
\begin{eqnarray}
 & &\int_{\pi(z,\be)\setminus B_\varepsilon(z)}   \frac{|y'-z|^{n-2}\widehat\chi_\cS(y,z)}{|y-z|^{n+\sigma}}\,dy \nonumber \\ 
 & &\qquad = \int_{(\pi(z,\be)\setminus B_\varepsilon(z))\cap\cA_i(z)} \frac{|y'-z|^{n-2}}{|y-z|^{n+\sigma}}\,dy-\int_{(\pi(z,\be)\setminus B_\varepsilon(z))\cap\cA_e(z)} \frac{|y'-z|^{n-2}}{|y-z|^{n+\sigma}}\,dy\nonumber\\ 
 & &\qquad =\int_{(\pi(z,\be)\setminus B_\varepsilon(z))\cap\cA_i(z)} \frac{|y'-z|^{n-2}}{|y-z|^{n+\sigma}}\,dy-\int_{(\pi(z,\be)\setminus B_\varepsilon(z))} \frac{|y'-z|^{n-2}}{|y-z|^{n+\sigma}}\,dy\nonumber\\
& &\qquad\qquad + \int_{(\pi(z,\be)\setminus B_\varepsilon(z))\cap\cA_e^c(z)} \frac{|y'-z|^{n-2}}{|y-z|^{n+\sigma}}\,dy\label{ddv2term}\\ 
 & &\qquad = -\int_{\pi(z,\be)\setminus B_\varepsilon(z)}\frac{|y'-z|^{n-2}}{|y-z|^{n+\sigma}}\,dy+2\int_{\pi(z,\be)\cap(\cA_i(z)\setminus B_\varepsilon(z))}\frac{|y'-z|^{n-2}}{|y-z|^{n+\sigma}}\,dy\,,\nonumber
\end{eqnarray}
where the first identity follows from \eqref{chiS},  $\cA_e^c(z)$   denotes the complement of $\cA_e(z)$, and the last identity follows since the set $(\pi(z,\be)\setminus B_\varepsilon(z))\cap (\cA_e^c(z)\setminus \cA_i(z))$ has measure zero.
As to the first of the two integrals on the right side, on setting
\beqn
y-z=r\ba(\varphi),\quad \ba(\varphi):=\cos\varphi\,\bn(z)+\sin\varphi\,\be,\quad (r,\varphi)\in (\varepsilon,\infty)\times (0,\pi),
\eeqn
we find that
\begin{align}
\nonumber\int_{\pi(z,\be)\setminus B_\varepsilon(z)} \frac{|y'-z|^{n-2}}{|y-z|^{n+\sigma}}\,dy&=\int_\ve^\infty \int_0^\pi r^{-1-\sigma}\sin^{n-2}\varphi\, d\varphi dr\\
\nonumber&=\frac{2}{\sigma\ve^\sigma}\int_0^{\pi/2} \sin^{n-2}(2\phi) d\phi\\
\nonumber&=\frac{2^{n-1}}{\sigma\ve^\sigma}\int_0^{\pi/2} \sin^{n-2}(\phi)\cos^{n-2}(\phi) d\phi\\
\label{firstint}&=\frac{2^{n-2}}{\sigma\ve^\sigma}B(\tfrac{n-1}{2},\tfrac{n-1}{2}),
\end{align}
where we have utilized \eqref{betatrig}, one of the many representations of the beta function.\footnote{For the reader's convenience, we have collected in the Appendix the definitions of the gamma and beta functions, as well as the properties of those functions we here use.} Using \eqref{gammahalf}, \eqref{gammaldf}, and \eqref{betagamma}, we find that \eqref{firstint} can be rewritten as
\begin{align}\label{firstint2}
\int_{\pi(z,\be)\setminus B_\varepsilon(z)} \frac{|y'-z|^{n-2}}{|y-z|^{n+\sigma}}\,dy&=\frac{2^{n-2}\Gamma(\tfrac{n-1}{2})^2}{\sigma\ve^\sigma\Gamma(n-1)}
=\frac{\Gamma(\tfrac{n-1}{2})\Gamma(\tfrac{1}{2})}{\sigma\ve^\sigma\Gamma(\tfrac{n}{2})}=\frac{1}{\sigma\ve^\sigma}B(\tfrac{n-1}{2},\tfrac{1}{2}).
\end{align}

The calculation of the second integral on the right-hand side of \eqref{ddv2term} is a bit more complicated. Note that, for $r\in(\varepsilon,2\rho)$, the spheres $B_r(z)$ and $\cS$ intersect in the plane $\pi(z,\be)$ at the point
\beqn
p(r)=z+r \sqrt{1-(\tfrac{r}{2\rho})^2}\be-\tfrac{r^2}{2\rho}\bn(z);
\eeqn
the angle between $(p(r)-z)$ and $\bn(z)$ is
\beqn
\varphi(r)=\pi-\arctan\frac{\sqrt{1-(\tfrac{r}{2\rho})^2}}{\tfrac{r}{2\rho}}\,;
\eeqn
in particular,
\beqn\label{phiprop}
\varphi(\varepsilon)=\pi-\arctan\frac{\sqrt{1-(\tfrac{\ve}{2\rho})^2}}{\tfrac{\ve}{2\rho}}\quad\text{and}\quad \lim_{\varepsilon\rightarrow 0} \varphi(\varepsilon)=\pi/2\,.
\eeqn
With this notation the second integral on the right-hand side of \eqref{ddv2term} can be written as
\begin{align}
\nonumber\int_{\pi(z,\be)\cap(\cA_i(z)\setminus B_\varepsilon(z))}\frac{|y'-z|^{2-n}}{|y-z|^{n+\sigma}}\,dy&=\int_{\varphi(\ve)}^\pi\int_\ve^{-2\rho\cos\phi} r^{-1-\sigma}\sin^{n-2}\phi\, drd\phi\\
\label{secondint}&\hspace{-.1in}=\frac{1}{\sigma}\int_{\varphi(\ve)}^\pi(\ve^{-\sigma}\sin^{n-2}\phi-(-2\rho\cos\phi)^{-\sigma}\sin^{n-2}\phi)\, d\phi.
\end{align}
Now{\clr ,} utilizing the change of variables $\phi\mapsto \phi-\pi/2$ and \eqref{betatrig} we find that
\begin{align}
\nonumber\int_{\pi(z,\be)\cap(\cA_i(z)\setminus B_\varepsilon(z))}\frac{|y'-z|^{2-n}}{|y-z|^{n+\sigma}}\,dy
&=\frac{1}{\sigma}\int_{\varphi(\ve)-\pi/2}^{\pi/2}(\ve^{-\sigma}\cos^{n-2}\phi-(2\rho\sin\phi)^{-\sigma}\cos^{n-2}\phi)\, d\phi\\
\nonumber&= \frac{B(\tfrac{1}{2},\tfrac{n-1}{2})}{2\sigma\ve^\sigma}+\frac{1}{\sigma\ve^\sigma} \int_{\phi(\ve)-\pi/2}^{0} \cos^{n-2}\phi\,d\phi\\
\label{secondint2}&\qquad\qquad-\frac{1}{\sigma}\int_{\phi(\ve)-\pi/2}^{\pi/2}(2\rho\sin\phi)^{-\sigma}\cos^{n-2}\phi\, d\phi\,.
\end{align}
Substituting \eqref{firstint2} and \eqref{secondint2} into \eqref{ddv2term} and using \eqref{betatrig} again results in
\beqn\label{bkse}
k_{s,\be}(z)=\lim_{\ve\rightarrow 0} \Big(\frac{2}{\sigma\ve^\sigma}\int^0_{\varphi(\ve)-\pi/2}\cos^{n-2}\varphi\, d\varphi-\frac{2}{\sigma}\int_{\phi(\ve)-\pi/2}^{\pi/2}(2\rho\sin\phi)^{-\sigma}\cos^{n-2}\phi\, d\phi\Big).
\eeqn
Using de L'H$\hat{\text{o}}$pital's rule, one finds that
\beqn
\lim_{\ve\rightarrow 0} \frac{\int^0_{\varphi(\ve)-\pi/2}\cos^{n-2}\varphi\, d\varphi}{\ve^\sigma}=\lim_{\ve\rightarrow 0} -\ve^{1-\sigma}\sin^{n-2}\varphi(\ve) \varphi'(\ve)=0.
\eeqn
Moreover, using \eqref{phiprop}$_2$ and \eqref{betatrig} we have
\beqn
\lim_{\ve\rightarrow 0} \frac{2}{\sigma}\int_{\phi(\ve)-\pi/2}^{\pi/2}(2\rho\sin\phi)^{-\sigma}\cos^{n-2}\phi\, d\phi
=\frac{B(\tfrac{1-\sigma}{2},\tfrac{n-1}{2})}{\sigma (2\rho)^\sigma}.
\eeqn
Thus, the nonlocal directional curvature vector for a sphere is
\beqn
k_{s,\be}(z)=-\frac{B(\tfrac{1-\sigma}{2},\tfrac{n-1}{2})}{\sigma(2\rho)^\sigma}.
\eeqn
In conclusion, in view of \eqref{Ldd2}, the nonlocal curvature tensor for a sphere is
\beqn
\bL_\sigma(z)=-\frac{B(\tfrac{1-\sigma}{2},\tfrac{n-1}{2})}{\sigma(2\rho)^\sigma}\bone_{T_z\cS}.
\eeqn

\section{Possible connections with fractional operators}\label{section4}

Here we outline a possible connection between the notion of nonlocal curvature tensor proposed in definition \eqref{Ldd} and certain fractional differential operators (see Subsection \ref{app1}).  This connection is motivated by the following lines of thinking.  

In the case where the surface under attention is the boundary of a set $E$, the classical curvature tensor is the opposite of the surface gradient of the normal (recall \eqref{clagrad}); moreover,  the normal can be obtained by looking at the jump part of the distributional derivative of the characteristic function of $E$.  Thus, the curvature tensor is related to a second derivative of $\chi_E$ or, equivalently, to the function $\tilde\chi_E$ defined in \eqref{chibar}. 
When the surface is not the boundary of a set, the appropriate generalization of $\tilde\chi_E$ is the function $\widehat\chi_\cS$ defined in \eqref{chiS}.  
If we compare \eqref{Hs}-\eqref{chiS} with \eqref{fraclaplace0}, we are driven to relate  the nonlocal mean curvature to the fractional Laplacian:
\beqn
H_\sigma(z)=\frac{1}{\nu_\alpha\omega_{n-2}}(-\Delta)^{\sigma/2}\widehat\chi_\cS(z,\cdot).
\eeqn
On using the identities \eqref{trdivid} and \eqref{divid} in Subsection \ref{app1}, it follows from this relation  that
\beqn
H_\sigma(z)=\frac{1}{\nu_\sigma\omega_{n-2}}\tr (\nabla^{\sigma/2}\nabla^{\sigma/2}\widehat\chi_\cS(z,\cdot)).
\eeqn
But this is not the only reason why we argue that perhaps a second fractional gradient of $\widehat\chi_\cS$ may be related to the nonlocal curvature tensor we propose. In fact, utilizing
 \eqref{sfdformula}, a result proved in Subsection \ref{app2},  we see that
\beqn\label{chiSsd}
\nabla^{\sigma/2}\nabla^{\sigma/2}\widehat\chi_\cS(z,\cdot)=\frac{-\nu_{\sigma}}{\sigma} \int_{\Real^n} \frac{\widehat\chi_\cS(z,z+\bv)}{|\bv|^{n+\sigma}}\Big(\frac{n+\sigma}{|\bv|^2}\bv\otimes\bv-\bone_{\Real^n}\Big)d\bv.
\eeqn
Let us compare the right side of \eqref{chiSsd} and \eqref{sLmod}, a relation we repeat here for the reader's convenience,
\[
\bL_\sigma(z)=\frac{n-1}{(1+\sigma)\omega_{n-2}} \int_{\Real^n} \frac{\widehat\chi_\cS (z,y)}{|z-y|^{n+\sigma}}((n+\sigma) \hat\be_z(y)\otimes \hat\be_z(y)-\text{\bf 1}_{T_z\cS})\, dy.
\]
The similarity is striking.\footnote{It was the $n+\sigma$ factor   appearing in front of $\bv\otimes\bv$ in the integrand in \eqref{chiSsd} which prompted us to introduce in \eqref{sLmod} the modification {\clr (}iii) stated in Section \ref{section3}.} However, 
unfortunately, although the trace of \eqref{chiSsd} does give the nonlocal mean curvature up to a multiplicative constant, it does not converge to the classical curvature tensor  in the limit as $\sigma$ goes to 1.  This negative outcome leaves us with a final, admittedly vague conjecture. While
the classical curvature tensor is defined using a surface gradient, to our knowledge no fractional analog of surface gradient has been developed yet.  Perhaps an appropriate notion of fractional surface gradient might establish that there is a direct connection with the nonlocal curvature tensor we have proposed.

\subsection{Fractional gradient, Laplacian, and divergence}
\label{app1}
For each $\alpha\in(0,1)$,  let 
\begin{equation}\label{mua}
\mu_\alpha:=\frac{2^\alpha \Gamma\left(\frac{n+\alpha+1}2\right)}{\pi^{n/2}\Gamma\left(\frac{1-\alpha}2\right)}\qquad\text{and}\qquad \nu_\alpha:=\frac{2^\alpha\Gamma(\frac{n+\alpha}{2})}{\pi^{n/2}\Gamma(-\frac{\alpha}2)}.
\end{equation}
The fractional gradient  $\nabla^\alpha$ and the Laplacian $(-\Delta)^{\alpha/2}$ of a function $f$ are
\begin{align}
\label{fracgrad0}\nabla^\alpha f(x)&:=\mu_\alpha\int_{\Real^n} \frac{f(y)\otimes(y-x)}{|x-y|^{n+\alpha+1}}dy,\\
\label{fraclaplace0}(-\Delta)^{\alpha/2} f(x)&:=\nu_\alpha\int_{\Real^n} \frac{f(y)-f(x)}{|x-y|^{n+\alpha}}dy.
\end{align}
Notice that these definitions make sense if $f$ is scalar-, vector-, or tensor-valued (in the first instance,  $f(y)\otimes(y-x)$ must be understood as $f(y)(y-x)$).  

The fractional divergence of a vector-valued function $\bw$ is 
\beqn
\label{fracdiv0}\text{div}^\alpha \bw(x):=\mu_\alpha\int_{\Real^n} \frac{(\bw(y)-\bw(x))\cdot(y-x)}{|x-y|^{n+\alpha+1}}dy.
\eeqn
One can readily see that
\beqn\label{trdivid}
\tr(\nabla^\alpha \bw)=\text{div}^\alpha \bw;
\eeqn
verifying the identity
\beqn\label{divid}
\text{div}^\alpha(\nabla^\beta f)=-(-\Delta)^{(\alpha+\beta)/2}f
\eeqn
is harder (for a proof, see 
\cite{Silhavy19}).

\subsection{A characterization of the fractional Hessian}\label{app2}

Here we characterize the second fractional gradient of a scalar-valued function.
The main result is the following theorem.

\begin{theorem}\label{hess}
Let $\alpha, \beta\in(0,1)$. If $f$ is a scalar-valued function, then
\beqn\label{sfdformula}
\nabla^\alpha(\nabla^\beta f)(x)=\frac{-\nu_{\alpha+\beta}}{(\alpha+\beta)}\int_{\Real^n}  \frac{f(x+\bv)-f(x)}{|\bv|^{n+\alpha+\beta}}
  \Big( \frac{n+\alpha+\beta}{|\bv|^{2}} \bv\otimes\bv - \textbf{\bf 1}_n \Big)
 d\bv.
\eeqn
\end{theorem}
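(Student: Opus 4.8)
The plan is to compute the composition $\nabla^\alpha(\nabla^\beta f)$ directly from the integral definitions \eqref{fracgrad0}--\eqref{fraclaplace0} and to recognize the resulting kernel. First I would write $\bw := \nabla^\beta f$, which is vector-valued with
\[
\bw(y) = \mu_\beta \int_{\Real^n} \frac{f(\xi) - f(y)}{|y-\xi|^{n+\beta+1}}(\xi - y)\, d\xi,
\]
where I have used that $\int (y-\xi)|y-\xi|^{-(n+\beta+1)}\,d\xi$ vanishes in the principal value sense so that the constant $f(y)$ can be freely inserted to regularize. Then $\nabla^\alpha \bw(x) = \mu_\alpha \int \bw(y)\otimes(y-x)\,|x-y|^{-(n+\alpha+1)}\,dy$, a double integral over $(y,\xi)$. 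The natural move is to change variables to the differences $\bv := \xi - x$ and to exploit that the answer must be expressible through $f(x+\bv)-f(x)$ with a kernel that is homogeneous of degree $-(n+\alpha+\beta)$ and, by rotational symmetry, built only from $\bone_n$ and $\bv\otimes\bv/|\bv|^2$.

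The cleanest route avoids a brute-force double integral. I would instead use the identities already recorded in Subsection \ref{app1}. From \eqref{divid}, $\mathrm{div}^\alpha(\nabla^\beta f) = -(-\Delta)^{(\alpha+\beta)/2} f$, and from \eqref{trdivid} this equals $\tr(\nabla^\alpha\nabla^\beta f)$; so the trace of the claimed right-hand side of \eqref{sfdformula} is forced to be $-(-\Delta)^{(\alpha+\beta)/2}f$, which one checks is consistent since $\tr\big(\tfrac{n+\alpha+\beta}{|\bv|^2}\bv\otimes\bv - \bone_n\big) = (n+\alpha+\beta) - n = \alpha+\beta$, and the prefactor $-\nu_{\alpha+\beta}/(\alpha+\beta)$ then reproduces \eqref{fraclaplace0} exactly. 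This fixes the scalar normalization. To pin down the full tensor I would argue that $\nabla^\alpha\nabla^\beta f(x)$ is a symmetric tensor (since it is a second gradient, or directly by a symmetry argument on the kernel), that it depends linearly on $f$ through increments $f(x+\bv)-f(x)$, and that the associated kernel $K(\bv)$ is a matrix-valued function, positively homogeneous of degree $-(n+\alpha+\beta)$ and $O(n)$-equivariant, hence of the form $K(\bv) = |\bv|^{-(n+\alpha+\beta)}\big(a\, \bv\otimes\bv/|\bv|^2 + b\,\bone_n\big)$ for scalars $a,b$. Two scalar conditions then determine $a,b$: the trace condition above gives $a + nb = -\nu_{\alpha+\beta}/(\alpha+\beta)\cdot(\alpha+\beta) = -\nu_{\alpha+\beta}$...

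More honestly, to get both constants I would test the formula on a convenient family of functions — for instance quadratic polynomials or, to stay within a class where the fractional operators are defined, functions of the form $f(x) = g(x\cdot\omega)$ for a fixed unit vector $\omega$, reducing the tensor identity to a one-dimensional computation in which the directional and transverse components of the kernel can be read off separately. Alternatively, and this is the route I would actually carry out in detail, I would integrate by parts: write $(y-x)|x-y|^{-(n+\alpha+1)} = -\tfrac{1}{\alpha-1}\nabla_y |x-y|^{-(\alpha-1)}$... no — rather use $(\xi-y)|y-\xi|^{-(n+\beta+1)} = \tfrac{1}{n+\beta-1}\nabla_\xi|y-\xi|^{-(n+\beta-1)}$ type identities (cf.\ the divergence identity quoted before \eqref{Hsrep}) to move the singular kernels onto $f$, collapse the double integral via Fubini after the change of variables $\bv = \xi - x$, $\bu = y - x$, and identify the resulting convolution kernel
\[
\int_{\Real^n} \frac{\bu \otimes (\xi - y)}{|\bu|^{n+\alpha+1}|y-\xi|^{n+\beta+1}}\, d(\text{appropriate variable})
\]
with an explicit Beta/Gamma-function multiple of $|\bv|^{-(n+\alpha+\beta)}\big((n+\alpha+\beta)\bv\otimes\bv/|\bv|^2 - \bone_n\big)$. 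Finally I would substitute the explicit values of $\mu_\alpha$, $\mu_\beta$, $\nu_{\alpha+\beta}$ from \eqref{mua}, using the duplication and reflection formulas for $\Gamma$ recorded in the Appendix, to check that the product of constants collapses to the single factor $-\nu_{\alpha+\beta}/(\alpha+\beta)$.

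The main obstacle I anticipate is the evaluation of that inner convolution of the two homogeneous kernels — an integral of the Riesz-composition type $\int |\bu|^{-s}|\bv - \bu|^{-t}\,d\bu$ with a tensorial numerator $\bu\otimes(\bv-\bu)$. One has to split the numerator into pieces proportional to $\bv\otimes\bv$, $\bv\otimes\bu$, $\bu\otimes\bv$, $\bu\otimes\bu$, use rotational symmetry to reduce each to scalar radial integrals, and keep careful track of the principal-value prescription and the convergence ranges (which is why $\alpha,\beta\in(0,1)$ is needed so that $\alpha+\beta<2$). Bookkeeping the Gamma-function constants through this reduction — and making sure the $n+\alpha+\beta$ coefficient emerges with the correct sign relative to $\bone_n$ — is where the real work lies; everything else is the structural argument sketched above.
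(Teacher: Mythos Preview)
Your reduction matches the paper's exactly: apply \eqref{fracgrad0} twice, change variables to $\bu=y-x$, $\bv=z-x$, and arrive at
\[
\nabla^\alpha(\nabla^\beta f)(x)=\mu_\alpha\mu_\beta\int_{\Real^n} f(x+\bv)\Big[\int_{\Real^n}\frac{(\bv-\bu)\otimes\bu}{|\bv-\bu|^{n+\beta+1}\,|\bu|^{n+\alpha+1}}\,d\bu\Big]d\bv,
\]
so the theorem reduces to evaluating the bracketed kernel. From this point the two approaches diverge. You propose to pin down the kernel by $O(n)$-equivariance and homogeneity --- forcing the form $|\bv|^{-(n+\alpha+\beta)}\big(a\,\bv\otimes\bv/|\bv|^2+b\,\bone_n\big)$ --- and then determine $(a,b)$ from two scalar conditions. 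The trace condition via \eqref{trdivid}--\eqref{divid} does give one relation, and your consistency check against \eqref{sfdformula} is correct. But you never actually produce the second relation: ``test on a convenient family'' and the integration-by-parts sketch are both left as gestures (and the latter has a sign slip: $(\xi-y)|y-\xi|^{-(n+\beta+1)}=-\tfrac{1}{n+\beta-1}\nabla_\xi|y-\xi|^{-(n+\beta-1)}$). Since the entire content of the theorem is precisely the pair $(a,b)$, the proposal stops one genuine computation short of a proof. A clean way to close it would be to contract the inner integral with $\bv\otimes\bv/|\bv|^2$ and reduce that to a scalar Riesz-composition integral --- but you still have to carry it out.

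The paper takes a quite different route for this key step and does not invoke symmetry at all. It evaluates the bracketed integral head-on by writing each factor $|\cdot|^{-(n+\gamma+1)}$ as a superposition of Gaussians via the Gauss--Weierstrass identity \eqref{GWprop0}; the $\bu$-integral then becomes a Gaussian convolution with a polynomial weight, computed componentwise through the Fourier transform (Lemma~\ref{lemGst}), and the remaining $(s,t)$-integral factors after the substitution $p=s+t$, $r=t/(s+t)$ into a Beta integral. This is heavier machinery than your idea, but it delivers both constants and all the $\Gamma$-factor bookkeeping in one stroke. Your equivariance shortcut, if completed, would be more conceptual and probably shorter; as written it is a correct strategy with the decisive lemma missing.
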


To prove it, we make use of the Fourier transform, that for a function $f$ is defined by
\begin{equation}\label{FT}
\mathcal{F}f(\xi)=\hat f(\xi)=\int_{\mathbb{R}^n}e^{-2\pi i x\cdot \xi} f(x) \,dx.
\end{equation}
The following identities are classical, see for instance \cite{Stein},
\begin{equation}\label{IFT}
f(x)=\int_{\mathbb{R}^n}e^{2\pi i x\cdot \xi} \hat f(\xi)\,d\xi,
\end{equation}

\begin{equation}\label{FTd}
\mathcal{F}(\partial^\gamma f)(\xi)=(2\pi i \xi)^\gamma \hat f(\xi),
\end{equation}

\begin{equation}\label{dFT}
\partial^\gamma \hat f(\xi)=\mathcal{F}\big((-2\pi i x)^\gamma f(x)\big)(\xi),
\end{equation}
where $\gamma$ is a multi-index.

The Gauss--Weierstrass kernel defined as
\beqn
g_t(\bu)=\frac{1}{(4\pi t)^{n/2}}e^{-\frac{|\bu|^2}{4t}},
\eeqn
satisfies the following identities:
\begin{equation}\label{FTgw}
g_t(\bu)=\mathcal{F}\big(e^{-4\pi^2 t |x|^2}\big)(\bu),
\end{equation}
and
\beqn\label{GWprop0}
\frac{1}{|\bu|^{n+\alpha+1}}=\frac{\pi^{n/2}}{2^{\alpha+1}\Gamma\left(\frac{n+\alpha+1}2\right)}\int_0^\infty g_t(\bu)t^{-(\alpha+3)/2}dt=\frac{1}{2\mu_\alpha\Gamma(\frac{1-\alpha}{2})}\int_0^\infty g_t(\bu)t^{-(\alpha+3)/2}dt,
\eeqn
where $\Gamma$ denotes the gamma function defined in \eqref{defgamma}.

The proof of Theorem \ref{hess} will be achieved through two Lemmas.

\begin{lemma}\label{lemGst}
Let $\gamma$ be a multi-index and
$$
G^\gamma_{s,t}(\bv)=\int_{\Real^n} \bu^\gamma g_s(\bv-\bu) g_t(\bu)\,d\bu.
$$
Then,
if $|\gamma|=1$ or  $|\gamma|=2$ and $\max_k{\gamma_k}=1$
 $$
G^\gamma_{s,t}(\bv)=\Big(\frac{t \bv}{s+t}\Big)^\gamma g_{s+t}(\bv),
$$
while if $|\gamma|=2$ and $\max_k{\gamma_k}=2$
 $$
G^\gamma_{s,t}(\bv)=\Big(\big(\frac{t \bv}{s+t}\big)^\gamma+\frac{2 s t}{s+t}\Big) g_{s+t}(\bv).
$$
\end{lemma}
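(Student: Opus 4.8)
The plan is to compute $G^\gamma_{s,t}(\bv)$ directly by recognizing the convolution $g_s * g_t$ with a polynomial weight, using the semigroup property of the Gauss--Weierstrass kernel together with the Fourier transform identities \eqref{FTd} and \eqref{FTgw}. Recall first the classical fact $g_s * g_t = g_{s+t}$, equivalently $\widehat{g_s}\,\widehat{g_t}=\widehat{g_{s+t}}$ since $\widehat{g_t}(\xi)=e^{-4\pi^2 t|\xi|^2}$ by \eqref{FTgw} (and the inversion \eqref{IFT}). The quantity $G^\gamma_{s,t}(\bv)=\int_{\Real^n}\bu^\gamma g_s(\bv-\bu)g_t(\bu)\,d\bu$ is the convolution, evaluated at $\bv$, of $g_s$ with the function $\bu\mapsto \bu^\gamma g_t(\bu)$. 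By \eqref{dFT}, multiplication by $\bu^\gamma$ on the physical side corresponds to applying the differential operator $(2\pi i)^{-|\gamma|}\partial^\gamma$ to $\widehat{g_t}$ on the Fourier side, so
\[
\mathcal{F}\big(\bu^\gamma g_t(\bu)\big)(\xi)=\frac{1}{(-2\pi i)^{|\gamma|}}\,\partial^\gamma\big(e^{-4\pi^2 t|\xi|^2}\big).
\]
Hence $\mathcal{F}\big(G^\gamma_{s,t}\big)(\xi)=e^{-4\pi^2 s|\xi|^2}\cdot\frac{1}{(-2\pi i)^{|\gamma|}}\partial^\gamma\big(e^{-4\pi^2 t|\xi|^2}\big)$, and the result will follow by inverting this transform.

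The concrete steps are as follows. First I would treat $|\gamma|=1$, say $\gamma=e_k$: then $\partial_{\xi_k}e^{-4\pi^2 t|\xi|^2}=-8\pi^2 t\,\xi_k\,e^{-4\pi^2 t|\xi|^2}$, so $\mathcal{F}(G^{e_k}_{s,t})(\xi)=\frac{8\pi^2 t}{2\pi i}\xi_k\, e^{-4\pi^2(s+t)|\xi|^2}=\frac{t}{t}\cdot(\text{const})\,\xi_k\widehat{g_{s+t}}(\xi)$; matching this against $\mathcal{F}(\bv_k\, c\, g_{s+t})$ via \eqref{dFT} again (now in reverse) identifies the physical-side function as $\frac{t}{s+t}\bv_k\, g_{s+t}(\bv)$, which is the claimed formula $\big(\tfrac{t\bv}{s+t}\big)^\gamma g_{s+t}(\bv)$. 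For $|\gamma|=2$ with distinct indices, $\gamma=e_j+e_k$, $j\neq k$: the second mixed derivative $\partial_{\xi_j}\partial_{\xi_k}e^{-4\pi^2 t|\xi|^2}=(8\pi^2 t)^2\xi_j\xi_k e^{-4\pi^2 t|\xi|^2}$ produces no extra term (the cross term from differentiating the exponent twice has no delta-like contribution because $\partial_{\xi_j}\xi_k=0$), so the same bookkeeping gives $\big(\tfrac{t\bv}{s+t}\big)^\gamma g_{s+t}(\bv)$. Finally, for $\gamma=2e_k$: here $\partial_{\xi_k}^2 e^{-4\pi^2 t|\xi|^2}=\big((8\pi^2 t)^2\xi_k^2 - 8\pi^2 t\big)e^{-4\pi^2 t|\xi|^2}$, and the extra $-8\pi^2 t$ term, upon multiplying by $e^{-4\pi^2 s|\xi|^2}$ and inverting, yields exactly the additional summand $\tfrac{2st}{s+t}g_{s+t}(\bv)$. (Here one uses that $\mathcal{F}^{-1}$ of $\text{const}\cdot\xi_k^2 e^{-4\pi^2(s+t)|\xi|^2}$ contributes a piece $\propto \tfrac{1}{s+t}g_{s+t}$ that must be combined with the $-8\pi^2 t$ term; carrying the constants gives the stated $\tfrac{2st}{s+t}$. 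Alternatively one can differentiate the scalar heat kernel identity $\int \bu^\gamma g_t(\bu)\,d\bu$ by completing the square in $\bu$ after writing $g_s(\bv-\bu)g_t(\bu) = g_{s+t}(\bv)\, g_{st/(s+t)}\big(\bu - \tfrac{t}{s+t}\bv\big)$, which is perhaps the cleanest route.)

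Indeed, the alternative I would actually present is the direct Gaussian-algebra computation: verify the pointwise identity
\[
g_s(\bv-\bu)\,g_t(\bu)=g_{s+t}(\bv)\,g_{\frac{st}{s+t}}\!\Big(\bu-\frac{t}{s+t}\bv\Big)
\]
by completing the square in the exponents (a routine but mechanical check of the quadratic form $\tfrac{|\bv-\bu|^2}{4s}+\tfrac{|\bu|^2}{4t}$), and then integrate $\bu^\gamma$ against the Gaussian $g_{st/(s+t)}$ centered at $\bm:=\tfrac{t}{s+t}\bv$ with variance parameter $\tau:=\tfrac{st}{s+t}$. The moments of a centered Gaussian are standard: $\int g_\tau(\bu-\bm)\,d\bu=1$, $\int \bu_k g_\tau(\bu-\bm)\,d\bu=\bm_k$, $\int \bu_j\bu_k g_\tau(\bu-\bm)\,d\bu=\bm_j\bm_k+2\tau\,\delta_{jk}$. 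Substituting $\bm=\tfrac{t\bv}{s+t}$ and $2\tau=\tfrac{2st}{s+t}$ gives precisely $G^\gamma_{s,t}(\bv)=g_{s+t}(\bv)\cdot\big(\bm^\gamma + 2\tau\,[\gamma=2e_k]\big)$, which is the two-case formula in the statement. The only mild obstacle is getting the normalization constant in the $g_\tau$ moment identity right — specifically that the second moment of $g_\tau$ is $2\tau$ per coordinate (because $g_\tau(\bu)=(4\pi\tau)^{-n/2}e^{-|\bu|^2/4\tau}$ has variance $2\tau$, not $\tau$), and the corresponding completion-of-the-square bookkeeping; once those constants are tracked carefully the result is immediate.
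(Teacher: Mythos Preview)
Your proposal is correct. The first route you sketch---taking Fourier transforms, using \eqref{dFT} to convert $\bu^\gamma$ into $\partial^\gamma$ acting on $\widehat{g_t}$, computing $\partial^\gamma e^{-4\pi^2 t|\xi|^2}$ case by case, and inverting---is exactly the paper's argument, down to the case split on $|\gamma|$ and $\max_k\gamma_k$.

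Your preferred alternative, however, is genuinely different and cleaner. The paper never writes the pointwise factorization $g_s(\bv-\bu)g_t(\bu)=g_{s+t}(\bv)\,g_{\tau}(\bu-\bm)$ with $\bm=\tfrac{t}{s+t}\bv$ and $\tau=\tfrac{st}{s+t}$; once that identity is established by completing the square (and the normalization check $(s+t)\tau=st$ you allude to), the lemma reduces to reading off the first and second moments of a single Gaussian, which avoids all the Fourier bookkeeping and the somewhat delicate recombination step the paper performs in the $\gamma=2e_k$ case. The Fourier approach has the advantage of reusing the transform identities already set up in the paper for other purposes, but your moment computation is shorter, more transparent about where the extra $\tfrac{2st}{s+t}$ term originates (it is simply the variance $2\tau$), and requires no special-function machinery.
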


\begin{proof}
By using \eqref{FT} and \eqref{FTgw} we have
$$
\begin{aligned}
G^\gamma_{s,t}(\bv)&=\int_{\Real^n} \bu^\gamma g_t(\bu)  \int_{\Real^n}  e^{-2\pi i x\cdot (\bv-\bu)} e^{-4\pi^2 s |x|^2} \,dx  \,d\bu\\
&=\frac 1{(2\pi i)^\gamma}\int_{\Real^n} e^{-2\pi i x\cdot \bv} e^{-4\pi^2 s |x|^2}    \int_{\Real^n}  e^{2\pi i x\cdot \bu} (2\pi i \bu)^\gamma g_t(\bu)   \,d\bu \,dx
\end{aligned}
$$
and thanks to \eqref{FTd} and \eqref{FTgw} we find
$$
\begin{aligned}
G^\gamma_{s,t}(\bv)
&=\frac 1{(2\pi i)^\gamma}\int_{\Real^n} e^{-2\pi i x\cdot \bv} e^{-4\pi^2 s |x|^2}    \int_{\Real^n}  e^{2\pi i x\cdot \bu} \mathcal{F}\big(\partial^\gamma e^{-4\pi^2 t |x|^2}\big)(\bu)   \,d\bu \,dx\\
&=\frac 1{(2\pi i)^\gamma}\int_{\Real^n} e^{-2\pi i x\cdot \bv} e^{-4\pi^2 s |x|^2}   \partial^\gamma e^{-4\pi^2 t |x|^2}\,dx
\end{aligned}
$$
where the last equality follows from \eqref{IFT}.
If $|\gamma|=1$,
\begin{equation}\label{g1}
 \partial^\gamma e^{-4\pi^2 t |x|^2}=-8\pi^2 t x^\gamma e^{-4\pi^2 t |x|^2},
\end{equation}
if $|\gamma|=2$ and $\max_k{\gamma_k}=1$, {\it i.e.}, $\gamma=(0,\ldots ,0,1,0,\ldots,0,1,0,\ldots,0)$, then
\begin{equation}\label{g2d}
 \partial^\gamma e^{-4\pi^2 t |x|^2}=(-8\pi^2 t x)^\gamma e^{-4\pi^2 t |x|^2},
\end{equation}
while if $|\gamma|=2$ and $\max_k{\gamma_k}=2$, {\it i.e.}, $\gamma=(0,\ldots ,0,2,0,\ldots,0)$, then
\begin{equation}\label{g2u}
 \partial^\gamma e^{-4\pi^2 t |x|^2}=(-8\pi^2 t x)^\gamma e^{-4\pi^2 t |x|^2}-8\pi^2 t  e^{-4\pi^2 t |x|^2}.
\end{equation}
For $|\gamma|=1$ or $|\gamma|=2$ and $\max_k{\gamma_k}=1$ we find
\begin{align}
G^\gamma_{s,t}(\bv)
&=\frac 1{(2\pi i)^\gamma}\int_{\Real^n} e^{-2\pi i x\cdot \bv} e^{-4\pi^2 (s+t) |x|^2}  (-8\pi^2 t x)^\gamma\,dx \nonumber\\
&=\frac 1{(2\pi i)^\gamma}\Big(\frac t{s+t}\Big)^\gamma\int_{\Real^n} e^{-2\pi i x\cdot \bv} e^{-4\pi^2 (s+t) |x|^2}  (-8\pi^2 (s+t) x)^\gamma\,dx\nonumber\\
&=\frac 1{(2\pi i)^\gamma}\Big(\frac t{s+t}\Big)^\gamma\int_{\Real^n} e^{-2\pi i x\cdot \bv}  \partial^\gamma e^{-4\pi^2 (s+t) |x|^2}\,dx\label{gg00}
\end{align}
where we used \eqref{g1} or \eqref{g2d}, according to $\gamma$. Hence, from\eqref{FT}, \eqref{FTd}, and \eqref{FTgw} it follows that
\begin{align}
G^\gamma_{s,t}(\bv)
&=\frac 1{(2\pi i)^\gamma}\Big(\frac t{s+t}\Big)^\gamma\mathcal{F}\big(\partial^\gamma  e^{-4\pi^2 (s+t) |x|^2}\big)(\bv)\nonumber\\
&=\frac 1{(2\pi i)^\gamma}\Big(\frac t{s+t}\Big)^\gamma (2\pi i \bv)^\gamma\mathcal{F}\big(e^{-4\pi^2 (s+t) |x|^2}\big)(\bv)\nonumber\\
&=\Big(\frac{t \bv}{s+t}\Big)^\gamma g_{s+t}(\bv). \label{gg01}
\end{align}
Instead, if $|\gamma|=2$ and $\max_k{\gamma_k}=2$ we have
$$
\begin{aligned}
G^\gamma_{s,t}(\bv)
&=\frac 1{(2\pi i)^\gamma}\int_{\Real^n} e^{-2\pi i x\cdot \bv} e^{-4\pi^2 (s+t) |x|^2}  \big((-8\pi^2 t x)^\gamma-8\pi^2 t\big)\,dx.
\end{aligned}
$$
By the identity
$$
\begin{aligned}
(-8\pi^2 t x)^\gamma-8\pi^2 t&= \Big(\frac{t}{s+t}\Big)^2\big((-8\pi^2 (s+t) x)^\gamma-8\pi^2 (s+t)\big)-\frac{8\pi^2 s t}{s+t}\\
&= \Big(\frac{t}{s+t}\Big)^2\frac{ \partial^\gamma e^{-4\pi^2 (s+t) |x|^2}}{e^{-4\pi^2 (s+t) |x|^2}}-\frac{8\pi^2 s t}{s+t}
\end{aligned}
$$
we find
$$
\begin{aligned}
G^\gamma_{s,t}(\bv)
&=\frac 1{(2\pi i)^\gamma} \Big(\frac{t}{s+t}\Big)^2\int_{\Real^n} e^{-2\pi i x\cdot \bv} \partial^\gamma e^{-4\pi^2 (s+t) |x|^2}\,dx\\
&\qquad\qquad-\frac{8\pi^2 s t}{s+t}\frac 1{(2\pi i)^\gamma}\int_{\Real^n} e^{-2\pi i x\cdot \bv} e^{-4\pi^2 (s+t) |x|^2}  \,dx.
\end{aligned}
$$
The first line of this equation is equal to \eqref{gg00} and hence to \eqref{gg01}, while the second may be simplified by means of \eqref{FTgw}. It is found that
 $$
G^\gamma_{s,t}(\bv)=\Big(\frac{t \bv}{s+t}\Big)^\gamma g_{s+t}(\bv)+\frac{2 s t}{s+t}\ g_{s+t}(\bv).
$$

\end{proof}

\begin{lemma}
We have
\beqn\label{sfg2}
 \int_{\Real^n} \frac{(\bv-\bu)\otimes\bu}{|\bu-\bv|^{n+\beta+1}|\bu|^{n+\alpha+1}}d\bu
 = \frac{-\nu_{\alpha+\beta}}{\mu_\alpha\mu_\beta (\alpha+\beta) |\bv|^{n+\alpha+\beta}}
  \Big( \frac{n+\alpha+\beta}{|\bv|^{2}} \bv\otimes\bv - \textbf{\bf 1}_{\Real^n} \Big).
\eeqn

\end{lemma}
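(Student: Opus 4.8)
The plan is to linearize the two singular kernels into Gaussians, interchange integrals, reduce the inner integral to one covered by Lemma~\ref{lemGst}, and then evaluate the two surviving one-dimensional integrals as Beta and Gamma integrals. First I would use the Gauss--Weierstrass representation \eqref{GWprop0}, i.e.\ $|\bw|^{-(n+\gamma+1)}=\big(2\mu_\gamma\Gamma(\tfrac{1-\gamma}{2})\big)^{-1}\int_0^\infty g_\tau(\bw)\,\tau^{-(\gamma+3)/2}\,d\tau$, with $\gamma=\beta$ applied to $|\bv-\bu|^{-(n+\beta+1)}$ and $\gamma=\alpha$ applied to $|\bu|^{-(n+\alpha+1)}$. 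After an interchange of the order of integration the left side of \eqref{sfg2} becomes
\[
\frac{1}{4\mu_\alpha\mu_\beta\Gamma(\tfrac{1-\alpha}{2})\Gamma(\tfrac{1-\beta}{2})}\int_0^\infty\!\!\int_0^\infty s^{-(\beta+3)/2}t^{-(\alpha+3)/2}\Big(\int_{\Real^n}(\bv-\bu)\otimes\bu\, g_s(\bv-\bu)\,g_t(\bu)\,d\bu\Big)\,ds\,dt .
\]

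For the inner integral I would write $(\bv-\bu)\otimes\bu=\bv\otimes\bu-\bu\otimes\bu$ and apply Lemma~\ref{lemGst} entrywise: the factor $\bv\otimes\bu$ uses the $|\gamma|=1$ formula, while $\bu\otimes\bu$ uses the two $|\gamma|=2$ formulas (off-diagonal entries the $\max_k\gamma_k=1$ case, diagonal entries the $\max_k\gamma_k=2$ case). Collecting terms gives
\[
\int_{\Real^n}(\bv-\bu)\otimes\bu\, g_s(\bv-\bu)\,g_t(\bu)\,d\bu=\Big(\frac{st}{(s+t)^2}\,\bv\otimes\bv-\frac{2st}{s+t}\,\bone_{\Real^n}\Big)g_{s+t}(\bv),
\]
since $\tfrac{t}{s+t}-\tfrac{t^2}{(s+t)^2}=\tfrac{st}{(s+t)^2}$.

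It then remains to integrate this against $s^{-(\beta+3)/2}t^{-(\alpha+3)/2}$. I would pass to variables $(r,\lambda)\in(0,\infty)\times(0,1)$ via $s=r\lambda$, $t=r(1-\lambda)$, with Jacobian $r$; then $s+t=r$, $g_{s+t}(\bv)=g_r(\bv)$, and both the $\tfrac{st}{(s+t)^2}$ and the $\tfrac{st}{s+t}$ factors separate the integral into a $\lambda$-part equal to $\int_0^1\lambda^{-(1+\beta)/2}(1-\lambda)^{-(1+\alpha)/2}\,d\lambda=B(\tfrac{1-\beta}{2},\tfrac{1-\alpha}{2})$ (finite because $\alpha,\beta\in(0,1)$) times an $r$-part of the form $\int_0^\infty r^{-a}e^{-|\bv|^2/(4r)}\,dr$, which the substitution $u=|\bv|^2/(4r)$ turns into $(|\bv|^2/4)^{1-a}\Gamma(a-1)$. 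Carrying this out, the $\bv\otimes\bv$ term produces $|\bv|^{-(n+\alpha+\beta+2)}\Gamma(\tfrac{n+\alpha+\beta+2}{2})$ and the $\bone_{\Real^n}$ term produces $|\bv|^{-(n+\alpha+\beta)}\Gamma(\tfrac{n+\alpha+\beta}{2})$; using $\Gamma(\tfrac{n+\alpha+\beta+2}{2})=\tfrac{n+\alpha+\beta}{2}\Gamma(\tfrac{n+\alpha+\beta}{2})$ I can pull out $\Gamma(\tfrac{n+\alpha+\beta}{2})$ and the tensor $\big(\tfrac{n+\alpha+\beta}{|\bv|^2}\bv\otimes\bv-\bone_{\Real^n}\big)$. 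Finally, writing $B(\tfrac{1-\beta}{2},\tfrac{1-\alpha}{2})=\Gamma(\tfrac{1-\alpha}{2})\Gamma(\tfrac{1-\beta}{2})/\Gamma(1-\tfrac{\alpha+\beta}{2})$, using the functional equation in the form $\Gamma(1-\tfrac{\alpha+\beta}{2})=-\tfrac{\alpha+\beta}{2}\,\Gamma(-\tfrac{\alpha+\beta}{2})$, and recalling the definitions \eqref{mua} of $\mu_\alpha$, $\mu_\beta$, and $\nu_{\alpha+\beta}$, all the constants collapse to $-\nu_{\alpha+\beta}\big/\big(\mu_\alpha\mu_\beta(\alpha+\beta)\big)$, which is precisely the prefactor on the right of \eqref{sfg2}.

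The hard part will be justifying the interchange of integrations: the $\bu$-integral on the left of \eqref{sfg2} is not absolutely convergent (the integrand is $O(|\bu|^{-(n+\alpha)})$ near $\bu=\bzero$ and $O(|\bv-\bu|^{-(n+\beta)})$ near $\bu=\bv$, converging only through the oddness of $\bu\mapsto\bu/|\bu|^{n+\alpha+1}$), so strictly the identity and the whole computation must be read in the principal-value sense, with Fubini applied after excising small balls around $\bzero$ and $\bv$ and the limit taken at the end; once past this point the Gaussian factors are smooth and rapidly decaying, and everything else is routine manipulation of Beta and Gamma functions.
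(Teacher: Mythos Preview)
Your proposal is correct and follows essentially the same route as the paper: linearize both kernels via the Gauss--Weierstrass identity \eqref{GWprop0}, compute the inner Gaussian integral with Lemma~\ref{lemGst} to obtain $\big(\tfrac{st}{(s+t)^2}\bv\otimes\bv-\tfrac{2st}{s+t}\bone\big)g_{s+t}(\bv)$, then separate variables by the substitution $s+t=r$ with the angular variable in $(0,1)$ producing the Beta factor. The only cosmetic differences are that the paper recognizes the remaining $r$-integral as another instance of \eqref{GWprop0} rather than computing it by hand, and that the paper does not explicitly discuss the principal-value interpretation you flag at the end.
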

\begin{proof}
With \eqref{GWprop0} we find
\begin{align}
 &{4\mu_\alpha\mu_\beta\Gamma(\tfrac{1-\alpha}{2})\Gamma(\tfrac{1-\beta}{2})}\int_{\Real^n} \frac{(\bv-\bu)\otimes\bu}{|\bu-\bv|^{n+\beta+1}|\bu|^{n+\alpha+1}}d\bu\nonumber\\
 & \hspace{2cm} =\int_{\Real^n} (\bv-\bu)\otimes\bu\int_0^\infty g_s(\bv-\bu)s^{-(\beta+3)/2}ds\int_0^\infty g_t(\bu)t^{-(\alpha+3)/2}dtd\bu\nonumber\\
  & \hspace{2cm} =  \int_0^\infty\int_0^\infty s^{-(\beta+3)/2} t^{-(\alpha+3)/2}
  \int_{\Real^n} (\bv-\bu)\otimes\bu\, g_s(\bv-\bu)g_t(\bu)d\bu ds dt \nonumber  \\
    & \hspace{2cm} =   \int_0^\infty\int_0^\infty s^{-(\beta+3)/2} t^{-(\alpha+3)/2}
  \bH(\bv, s,t) ds dt  \label{Heq}
\end{align}
where we have set
$$
\bH(\bv, s,t)= \int_{\Real^n} (\bv-\bu)\otimes\bu\, g_s(\bv-\bu)g_t(\bu)d\bu.
$$
The component $pq$, with $p\ne q$, is
$$
H_{pq}(\bv, s,t)= \int_{\Real^n} (v_p-u_p)u_q\, g_s(\bv-\bu)g_t(\bu)d\bu=\int_{\Real^n} (v_p \bu^{\bar \gamma}-\bu^{ \gamma})\, g_s(\bv-\bu)g_t(\bu)d\bu
$$
where
$$
\begin{aligned}
\bar \gamma&=(0,\ldots ,0,1,0,\ldots,0) \quad\mbox{ with the one in the $q$ position}\\
\gamma&=(0,\ldots ,0,1,0,\ldots,0,1,0,\ldots,0) \quad\mbox{ with the ones in the $p$ and $q$ positions}.
\end{aligned}
$$ 
By Lemma \ref{lemGst},  for $p\ne q$ we have
\begin{align}
H_{pq}(\bv, s,t)&= v_p G^{\bar \gamma}_{s,t}(\bv)-G^{\gamma}_{s,t}(\bv)
=v_p\Big(\frac{t \bv}{s+t}\Big)^{\bar \gamma} g_{s+t}(\bv)-\Big(\frac{t \bv}{s+t}\Big)^\gamma g_{s+t}(\bv)\nonumber\\
&=\frac{t}{s+t}v_p v_q g_{s+t}(\bv)-\big(\frac{t }{s+t}\big)^2 v_p v_q g_{s+t}(\bv)=\frac{st}{(s+t)^2}v_p v_q g_{s+t}(\bv),\label{gg02}
\end{align}
while the $qq$ component is
$$
H_{qq}(\bv, s,t)= \int_{\Real^n} (v_qu_p-u_q^2)\, g_s(\bv-\bu)g_t(\bu)d\bu=\int_{\Real^n} (v_p \bu^{\bar \gamma}-\bu^{ \hat \gamma})\, g_s(\bv-\bu)g_t(\bu)d\bu,
$$
where
$$
\hat \gamma=(0,\ldots ,0,2,0,\ldots,0) \quad\mbox{ with the two in the $q$ position}.
$$ 
Again, by Lemma \ref{lemGst},  we have
\begin{align}
H_{qq}(\bv, s,t)&=v_q G^{\bar \gamma}_{s,t}(\bv)-G^{\hat \gamma}_{s,t}(\bv)=v_q\Big(\frac{t \bv}{s+t}\Big)^{\bar \gamma} g_{s+t}(\bv)-\Big(\big(\frac{t \bv}{s+t}\big)^{\hat \gamma}+\frac{2 s t}{s+t}\Big) g_{s+t}(\bv)\nonumber\\
&=\frac{t}{s+t}v_q v_q g_{s+t}(\bv)-\big(\frac{t }{s+t}\big)^2 v_q v_q g_{s+t}(\bv)-\frac{2 s t}{s+t}g_{s+t}(\bv)\nonumber\\
&=\frac{st}{(s+t)^2}v_q v_q g_{s+t}(\bv)-\frac{2 s t}{s+t}g_{s+t}(\bv).\label{gg03}
\end{align}
By \eqref{gg02} and \eqref{gg03} we deduce that
$$
\bH(\bv, s,t)= \big(\frac{st}{(s+t)^2} \bv\otimes\bv -\frac{2 s t}{s+t}\textbf{\bf 1}_{\Real^n}\big)g_{s+t}(\bv).
$$
We therefore find
$$
\begin{aligned}
 & \int_0^\infty\int_0^\infty s^{-(\beta+3)/2} t^{-(\alpha+3)/2} \bH(\bv, s,t) ds dt  \\
 & \hspace{1cm} =   \int_0^\infty\int_0^\infty \frac{s^{-(\beta+1)/2} t^{-(\alpha+1)/2}}{s+t} \big(\frac{1}{s+t} \bv\otimes\bv -2\textbf{\bf 1}_{\Real^n}\big)g_{s+t}(\bv)ds dt.
 \end{aligned}
$$
Setting $p=s+t$ and $r=t/(s+t)$ we have that $s=p(1-r)$ and $t=p r$ and the Jacobian of the transformation is $1/(s+t)$. Thus
$$
\begin{aligned}
 & \int_0^\infty\int_0^\infty s^{-(\beta+3)/2} t^{-(\alpha+3)/2} \bH(\bv, s,t) ds dt  \\
 & \hspace{1cm} =   \int_0^\infty\int_0^1 (p(1-r))^{-(\beta+1)/2} (pr)^{-(\alpha+1)/2} \big(\frac{1}{p} \bv\otimes\bv -2\textbf{\bf 1}_{\Real^n}\big)g_{p}(\bv)dr dp\\
  & \hspace{1cm} =   \int_0^\infty p^{-(\alpha +\beta+2)/2}  \big(\frac{1}{p} \bv\otimes\bv -2\textbf{\bf 1}_{\Real^n}\big)g_{p}(\bv) dp\int_0^1 (1-r)^{-(\beta+1)/2} r^{-(\alpha+1)/2}dr\\
  & \hspace{1cm} =   B(\tfrac{1-\alpha}2,\tfrac{1-\beta}2)\int_0^\infty p^{-(\alpha +\beta+2)/2}  \big(\frac{1}{p} \bv\otimes\bv -2\textbf{\bf 1}_{\Real^n}\big)g_{p}(\bv) dp,
  \end{aligned}
$$
where to obtain the last identity we used \eqref{betatt}. 
By using \eqref{GWprop0}, we find that
$$
\begin{aligned}
 & \int_0^\infty\int_0^\infty s^{-(\beta+3)/2} t^{-(\alpha+3)/2} \bH(\bv, s,t) ds dt  \\
  & \hspace{1cm} = B(\tfrac{1-\alpha}2,\tfrac{1-\beta}2)\Big(
 \frac{2^{\alpha+\beta+2}\Gamma\left(\frac{n+\alpha+\beta+2}2\right)}{\pi^{n/2}|\bv|^{n+\alpha+\beta+2}} \bv\otimes\bv -
  2 \frac{2^{\alpha+\beta}\Gamma\left(\frac{n+\alpha+\beta}2\right)}{\pi^{n/2}|\bv|^{n+\alpha+\beta}} \textbf{\bf 1}_{\Real^n}
  \Big)\\
  & \hspace{1cm} = B(\tfrac{1-\alpha}2,\tfrac{1-\beta}2) 
   \frac{2^{\alpha+\beta+1}\Gamma\left(\frac{n+\alpha+\beta}2\right)}{\pi^{n/2}|\bv|^{n+\alpha+\beta}}
  \Big( \frac{n+\alpha+\beta}{|\bv|^{2}} \bv\otimes\bv - \textbf{\bf 1}_{\Real^n}\Big),
 \end{aligned}
$$
where we used \eqref{gammafact}. From this identity, \eqref{betagamma},
and \eqref{Heq} we deduce that
\begin{align}
 &\int_{\Real^n} \frac{(\bv-\bu)\otimes\bu}{|\bu-\bv|^{n+\beta+1}|\bu|^{n+\alpha+1}}d\bu\nonumber\\
    & \hspace{1cm} = \frac{1}{4\mu_\alpha\mu_\beta\Gamma(1-\frac{\alpha+\beta}{2})}
   \frac{2^{\alpha+\beta+1}\Gamma\left(\frac{n+\alpha+\beta}2\right)}{\pi^{n/2}|\bv|^{n+\alpha+\beta}}
  \Big( \frac{n+\alpha+\beta}{|\bv|^{2}} \bv\otimes\bv - \textbf{\bf 1}_{\Real^n} \Big)\\
      & \hspace{1cm} = \frac{-1}{\mu_\alpha\mu_\beta|\bv|^{n+\alpha+\beta}(\alpha+\beta)}
   \frac{2^{\alpha+\beta}\Gamma\left(\frac{n+\alpha+\beta}2\right)}{\pi^{n/2}\Gamma(-\frac{\alpha+\beta}{2})}
  \Big( \frac{n+\alpha+\beta}{|\bv|^{2}} \bv\otimes\bv - \textbf{\bf 1}_{\Real^n} \Big)
\end{align}
from which the statement of the Lemma follows.
\end{proof}

\begin{proof}[Proof of Theorem \ref{hess}]
By applying \eqref{fracgrad0} twice, we deduce that
\beqn
\nabla^\alpha(\nabla^\beta f)(x)=\mu_\alpha \mu_\beta\int_{\Real^n} \int_{\Real^n} \frac{f(z)(z-y)\otimes(y-x)}{|y-z|^{n+\beta+1}|x-y|^{n+\alpha+1}}dzdy,
\eeqn
and making the change of variables $\bu=y-x$ and $\bv=z-x$, we obtain
\beqn\label{sfg}
\nabla^\alpha(\nabla^\beta f)(x)=\mu_\alpha \mu_\beta\int_{\Real^n} f(x+\bv)\Big[ \int_{\Real^n} \frac{(\bv-\bu)\otimes\bu}{|\bu-\bv|^{n+\beta+1}|\bu|^{n+\alpha+1}}d\bu\Big] d\bv.
\eeqn
The proof of Theorem \ref{hess} readily follows  from \eqref{sfg} and \eqref{sfg2}.
\end{proof}


\section{Appendix. Some properties of the gamma and beta functions}
The gamma function $\Gamma$ is defined for all positive numbers $x$ by
\beqn\label{defgamma}
\Gamma(x)=\int_0^\infty y^{x-1}e^{-y}dy;
\eeqn
it can be viewed as a generalization of the factorial function in that it satisfies
\beqn\label{gammafact}
\Gamma(x+1)=x\Gamma(x).
\eeqn
While $\Gamma(n)=(n-1)!$ for all $n\in\Nat$, $n\not=0$, perhaps the most famous value of the gamma function on a non natural number is
\beqn\label{gammahalf}
\Gamma(\tfrac{1}{2})=\sqrt{\pi}.
\eeqn
{Of the many identities involving the gamma function, the one we use} is the Legendre duplication formula
\beqn\label{gammaldf}
\Gamma(x)\Gamma(x+\tfrac{1}{2})=2^{1-2x}\sqrt{\pi}\Gamma(2x).
\eeqn
and Euler's reflection formula 
\beqn\label{gammaref}
\Gamma(\tfrac{1}{2}-x)\Gamma(\tfrac{1}{2}+x)=\frac{\pi}{\cos(\pi x)}.
\eeqn

The beta function $B$ 
has many equivalent {definitions, of which the two useful} here are
\begin{align}
\label{betatrig}
B(x,y)&=2\int_0^{\pi/2} \sin^{2x-1}\theta\cos^{2y-1}\theta\, d\theta \\
\label{betatt}&=\int_0^1 t^{x-1}(1-t)^{y-1}dt,\qquad x,y>0.
\end{align}
{$B$ is related to $\Gamma$ by} the following relation:
\beqn\label{betagamma}
B(x,y)=\frac{\Gamma(x)\Gamma(y)}{\Gamma(x+y)}.
\eeqn

\medskip

\noindent
{\bf Acknowledgements.}
RP and PPG have conducted this research under the auspices of the Italian National Group for Mathematical Physics (GNFM) of the National Institute for Advanced Mathematics (INdAM). RP acknowledges support from the Project PRIN\,2017 no.\,20177TTP3S.

\bibliographystyle{acm}

\end{document}